\numberwithin{equation}{section}
\newtheorem{theorem}{Theorem}
\newtheorem{lemma}{Lemma}
\newtheorem{corollary}{Corollary}
\newtheorem{proposition}{Proposition}
\newtheorem{remark}{Remark}
\numberwithin{theorem}{section}
\numberwithin{corollary}{section}
\numberwithin{lemma}{section}
\numberwithin{definition}{section}
\numberwithin{proposition}{section}
\numberwithin{remark}{section}
\newcommand{\R}{\mathbb R}
\newcommand{\N}{\mathbb N}
\newcommand{\medint}{-\kern  -,375cm\int}
\newcommand{\dint}{\displaystyle\int}
\begin{document}

\title{Eigenvalue estimates for $p$-Laplace problems on domains expressed in Fermi Coordinates}
\author{B. Brandolini$^{\ast}$ - F. Chiacchio$^\dagger$  - J. J. Langford$^\ddagger$}

\keywords{$p$-Laplacian; Neumann eigenvalues; lower bounds; non-convex domains}
\subjclass[2020]{35P30, 35J92, 35P15}
\date{}

\maketitle
\footnotesize{
\centerline{$^\ast$ Dipartimento di Matematica e Informatica}
\centerline{Universit\`a degli Studi di Palermo}
\centerline{via Archirafi 34, 90123 Palermo, Italy}
\centerline{\texttt{ barbara.brandolini@unipa.it}}

\bigskip

\centerline{$^\dagger$ Dipartimento di Matematica e Applicazioni ``R. Caccioppoli''}
\centerline{Universit\`a degli Studi di Napoli Federico II}
\centerline{Monte S. Angelo, via Cintia, I-80126 Napoli, Italy}
\centerline{\texttt{francesco.chiacchio@unina.it}}

\bigskip
\centerline{$^\ddagger$ Department of Mathematics, Bucknell University}
\centerline{One Dent Drive, Lewisburg, PA 17837, USA}
\centerline{\texttt{jeffrey.langford@bucknell.edu}
}

\begin{abstract}
We prove explicit and sharp eigenvalue estimates for Neumann $p$-Laplace eigenvalues in domains that admit a representation in Fermi coordinates. More precisely, if $\gamma$ denotes a non-closed curve in $\mathbb{R}^2$ symmetric with respect to the $y$-axis, let $D\subset \mathbb{R}^2$ denote the domain of points that lie on one side of $\gamma$ and within a prescribed distance $\delta(s)$ from $\gamma(s)$ (here $s$ denotes the arc length parameter for $\gamma$). Write $\mu_1^{odd}(D)$ for the lowest nonzero eigenvalue of the Neumann $p$-Laplacian with an eigenfunction that is odd with respect to the $y$-axis. For all $p>1$, we provide a lower bound on $\mu_1^{odd}(D)$ when the distance function $\delta$ and the signed curvature $k$ of $\gamma$ satisfy certain geometric constraints. In the linear case ($p=2$), we establish sufficient conditions to guarantee $\mu_1^{odd}(D)=\mu_1(D)$. We finally study the asymptotics of $\mu_1(D)$ as the distance function tends to zero. We show that in the limit, the eigenvalues converge to the lowest nonzero eigenvalue of a weighted one-dimensional Neumann $p$-Laplace problem.
\end{abstract}

\section{Introduction and Main Results}

Suppose that $D\subset \R^2$ is a bounded, Lipschitz domain and $p>1$. We consider the Neumann eigenvalue problem for the $p$-Laplace operator, that is,
\begin{equation}\label{NP}
\left\{
\begin{array}{ll}
-\Delta_p u=\mu |u|^{p-2}u &\mbox{in}\>D,
\\ \\
|\nabla u|^{p-2}\frac{\partial u}{\partial \mathbf{n}}=0 &\mbox{on}\> \partial D,
\end{array}
\right.
\end{equation}
where $\Delta_p=\textup{div}(|\nabla u|^{p-2}\nabla u)$ denotes the $p$-Laplace operator and $\frac{\partial u}{\partial \mathbf{n}}$ denotes the outer normal derivative. We are interested in estimating the first nonzero eigenvalue $\mu_1(D)$ of \eqref{NP}. As is well-known, this eigenvalue is characterized variationally as
\begin{equation}\label{varchar}
\mu_1(D)=\min\left\{\dfrac{\dint_D |\nabla v|^p\,dxdy}{\dint_D |v|^p\,dxdy} :\> v \in W^{1,p}(D)\setminus\{0\}, \> \int_D |v|^{p-2}v\, dxdy=0 \right\};
\end{equation}
for more on the $p$-Laplace operator we refer the interested reader to \cite{L} and the references therein.

In the present paper, we focus our attention on a specific class of domains $D$ that have a line of symmetry, chosen to be the $y$-axis. Our  domains are non-convex in general. To set the scene, say $\gamma(s)=(x(s),y(s)),\> s \in [0, L],$ is a smooth, simple, non-closed curve, parametrized by its arc length, and symmetric with respect to the $y$-axis. The coordinate functions of $\gamma$ therefore satisfy
$$
x(L-s)=-x(s), \quad y(L-s)=y(s), \quad s \in \left[0,\frac{L}{2}\right].
$$
We denote by $k(s)$ the signed curvature of $\gamma(s)$. We consider domains constructed via Fermi (or parallel) coordinates, using $\gamma$ as a frame of reference (see also \cite{EHL,EL}). If $\delta:[0,L]\to (0,\infty)$ is smooth and even with respect to $s=\frac{L}{2}$, we may describe our domains $D$ as follows:  
\begin{equation} \label{D}
D=
\left\{(x(s)+ry'(s), y(s)-rx'(s)): \> s \in (0,L), \> r \in (0,\delta(s))\right\}.
\end{equation}
Geometrically, the domain $D$ is constructed as follows. We find a normal vector to $\gamma(s)$ by rotating the tangent vector $\gamma'(s)$ by $\frac{\pi}{2}$ radians clockwise. For each point along the curve $\gamma(s)$, one moves along this normal direction a positive distance less than $\delta(s)$. See Figure 1.

\begin{figure}[h]
\includegraphics[scale=0.3]{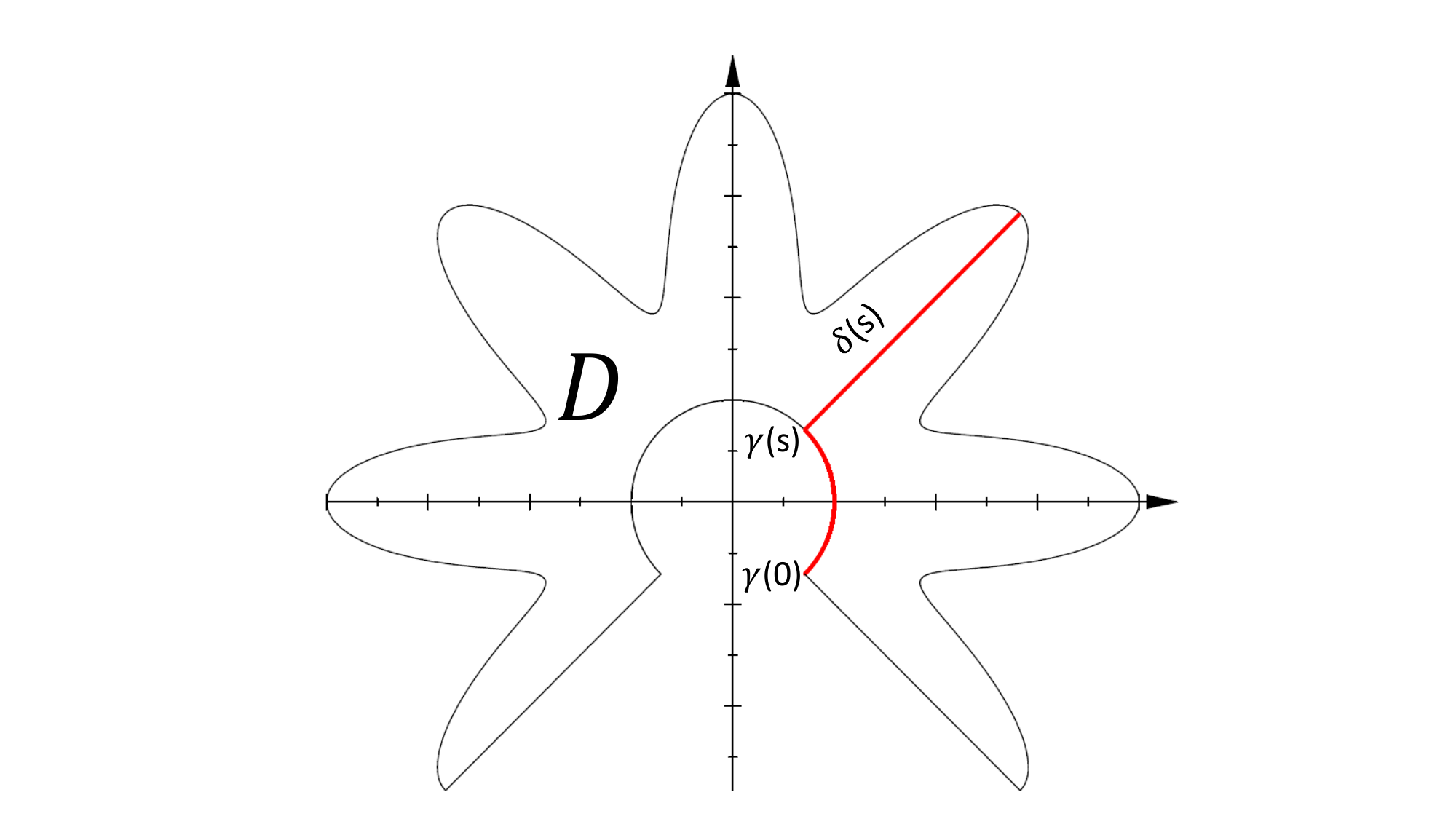}
\caption{}
\end{figure}

In  \cite{BCDL} we estimated $\mu_1(D)$ from below for domains as in \eqref{D} when $p=2$ and  $\delta$ is constant. The aim of the present paper is to generalize the results contained in \cite{BCDL} along several directions. Firstly,  the set $D$ may  have nonconstant width, i.e. here $\delta=\delta(s)$ is a function of the arc length parameter. Secondly,  we obtain results for the nonlinear $p$-Laplace operator. Finally, we are also interested in discovering the asymptotic behavior of $\mu_1(D)$ as $\delta$ goes to 0.

 In order to state properly our main results, we denote by 
$\mu_1^{odd}(D)$ the first nonzero eigenvalue of \eqref{NP}, having a corresponding eigenfunction which is odd with respect to the $y$-axis.  The existence of such an eigenfunction and its variational characterization are given, for instance, in \cite{CP,KL}.
Let $D^F$ be the domain $D$ written in Fermi coordinates, that is,
$$
D^F=
\{(s,r)\in \R^2: \> 0 <s < L, \>0 < r < \delta(s)\}.
$$
For the entirety of our paper, we assume that the Fermi transformation is one-to-one from $\overline{D^F}$ to $\overline{D}$.

Our  first main result establishes a sufficient condition for $\mu_1(D)=\mu_1^{odd}(D)$ in the linear case $p=2$.

\begin{theorem}\label{t2}
 Let $p=2$ and assume $1+rk(s)>0$ on $\overline {D^F}$.   
Then 
$$\mu_1(D)=\mu _{1}^{odd}(D)$$
if any of the following conditions are satisfied:
\begin{enumerate}
 \item[(a)]  $k(s)\ge0$ for all $s \in [0,L]$ and 
 \begin{equation}\label{delta}
\mu_1(D)<\frac{1}{\underset{s\in[0,L]}{\max} (2\delta(s)+\delta(s)^2 k(s))^2}; 
\end{equation}

\item[(b)] $k(s)<0$ for all $s \in [0,L]$ and
\begin{equation}\label{delta1}
\mu_1(D)<\dfrac{1}{\underset{s\in[0,L]}{\max}\dfrac{4\delta(s)^2}{ (1+\delta(s) k(s))^2}};
\end{equation}

\item[(c)] $k(s)$ changes its sign in $[0,L]$, and
\begin{equation}\label{delta2}
\mu_1(D)<\dfrac{1}{\max\left\{  \underset{s\in[0,L]}{\max} (2\delta(s)+\delta(s)^2 k(s))^2,  \underset{s\in[0,L]}{\max}\dfrac{4\delta(s)^2}{ (1+\delta(s) k(s))^2}\right\}}.
\end{equation}

\end{enumerate}
\end{theorem}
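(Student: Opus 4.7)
Since $p = 2$, the problem is linear. Let $R_y:(x,y)\mapsto(-x,y)$ denote reflection across the $y$-axis; any eigenfunction $u$ of $\mu_1(D)$ decomposes as $u = u_e + u_o$ with $u_e = \frac{1}{2}(u + u\circ R_y)$ and $u_o = \frac{1}{2}(u - u\circ R_y)$. By the symmetry of $D$ and the $R_y$-invariance of the Neumann problem, each nontrivial part is itself an eigenfunction at $\mu_1(D)$. If some first eigenfunction has nonzero odd part, then $\mu_1(D)\geq \mu_1^{odd}(D)$; the reverse inequality is automatic, giving the conclusion. So the only remaining case is that every first eigenfunction is purely even.

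In that case, pick an even first eigenfunction $u$ with $\int_D u\,dxdy = 0$, so $\mu_1(D) = \int_D |\nabla u|^2/\int_D u^2$. The strategy is to prove $\int_D u^2 \leq C \int_D |\nabla u|^2$ where $C$ is $\max_s(2\delta+\delta^2 k)^2$ in case (a), $\max_s 4\delta^2/(1+\delta k)^2$ in case (b), or the larger of the two in case (c); this forces $\mu_1(D) \geq 1/C$, contradicting the hypothesis. Working in Fermi coordinates (metric $(1+rk)^2\,ds^2+dr^2$, area element $(1+rk)\,ds\,dr$, evenness $u(L-s,r)=u(s,r)$), the key geometric identity is $2\delta+\delta^2 k = 2\int_0^\delta(1+rk)\,dr$, identifying $C$ in (a) with twice the weighted thickness of the $s$-slice of $D$; likewise $2\delta/(1+\delta k)$ in (b) has a dual interpretation as the slice thickness divided by the minimum weight on the slice. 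The bound follows from a weighted one-dimensional Cauchy--Schwarz or Poincar\'e estimate in $r$ at fixed $s$ (writing $u(s,r) = u(s,0) + \int_0^r u_{r'}\,dr'$), combined with integration in $s$ using the evenness of $u$ and the mean-zero condition to control the trace $u(s,0)$. The case analysis mirrors the usable bound on the weight: $1\leq 1+rk$ for (a), $1+\delta k\leq 1+rk$ for (b), and a combination for (c).

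The main obstacle is that the desired inequality $\int_D v^2\leq C\int_D|\nabla v|^2$ fails for arbitrary even mean-zero $v\in H^1(D)$ with the advertised $C$: low even cosine modes on long, thin rectangles furnish counterexamples in case (a). The proof must therefore exploit $u$'s status as an \emph{actual} first eigenfunction (not a generic test function), using the pointwise equation $-\Delta u = \mu_1(D)u$ and the Neumann boundary condition to couple tangential and normal derivatives. Integration by parts in Fermi coordinates, respecting the weighted metric and the boundary data, is the delicate technical step in pinning down the explicit constants stated in the theorem.
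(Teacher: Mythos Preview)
Your reduction to the case where every first eigenfunction is even is correct, and you have correctly identified that a direct weighted Poincar\'e inequality in the $r$-direction for arbitrary even mean-zero $H^1$ functions cannot hold with the advertised constant. But the salvage you then propose---using the pointwise equation $-\Delta u=\mu_1(D)u$ and integration by parts to couple tangential and normal derivatives---is not the mechanism the paper uses, and as stated it is only a hope rather than an argument. The proof in the paper does not exploit the PDE in this way.

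The missing idea is a nodal-domain argument. Once $u$ is an even first eigenfunction, Courant's theorem forces its nodal set $\Gamma$ to be a single arc joining two boundary points $P_L,P_R$ that are symmetric across the $y$-axis; moreover $\Gamma$ cannot join $\gamma$ to $\gamma^\delta$ (that would create three nodal domains), so both endpoints lie on the same boundary component: either on $\gamma$, on the lateral segments $\mathcal S$, or on $\gamma^\delta$. One then sets up the mixed Dirichlet--Neumann problem on $D$ with Neumann on the arc $\wideparen{P_LP_R}$ and Dirichlet on the rest of $\partial D$, and uses $u_+$ as a trial function to get $\mu_1(D)\ge \lambda^{ND}(D)$. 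The point is that the corresponding eigenfunction $\psi$ vanishes on an entire horizontal boundary piece: on $\{r=\delta(s)\}$ in case $(i)$, and on $\{r=0\}$ in cases $(ii)$--$(iii)$. This Dirichlet condition in the $r$-variable is exactly what makes a one-dimensional weighted estimate in $r$ (via Lemma~\ref{pw} after an even/odd reflection, or via a Hardy--Maz'ya inequality when the weight is not concave) go through with the stated constants. Without this nodal analysis you never obtain a Dirichlet condition on a full $r$-boundary, which is why your direct approach stalls.
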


We next establish a lower bound on $\mu_1^{odd}(D)$ so long as the distance and curvature functions satisfy certain concavity assumptions. Our lower bound depends on whether the distance function $\delta(s)$ is constant or not.

\begin{theorem}[Constant $\delta$]\label{t11}
Let $p>1$ and assume $\delta(s)=\delta$ is constant. If $k(s)$ is concave on $[0,L]$ with $1+rk(s)>0$ on $\overline{D^F}$, then 
\begin{equation}\label{est11}
\mu_1^{odd}(D) \ge A_p \left(\frac{\pi_p}{L}\right)^p,
\end{equation}
where
\begin{equation}\label{A_p}
A_p=\displaystyle \min_{ \overline{D^F}}\frac{1}{(1+rk(s))^p}
\end{equation}
and
\begin{equation}\label{pi_p}
\pi_p=2\dint_0^{+\infty} \frac{1}{1+\frac{1}{p-1}s^p}\,ds=2\pi \frac{(p-1)^{1/p}}{p\sin (\pi/p)}.
\end{equation}
\end{theorem}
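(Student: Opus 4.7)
\emph{Plan.} The strategy is to pull the Rayleigh quotient back to the rectangular parameter domain $D^F$, pointwise dominate the energy from below, and reduce to a one-dimensional Neumann $p$-Poincar\'e inequality in the $s$-variable. A direct computation in Fermi coordinates gives the area element $(1+rk)\,dsdr$ and $|\nabla u|^{2} = u_{s}^{2}/(1+rk)^{2} + u_{r}^{2}$. Discarding the $u_r^2$-contribution and using $(a^2+b^2)^{p/2}\ge |a|^p$, for any admissible $u$ one obtains
\begin{equation*}
\int_D |\nabla u|^p\,dxdy \ \ge\ \int_{D^F} \frac{|u_s|^p}{(1+rk)^{p-1}}\,dsdr,\qquad
\int_D |u|^p\,dxdy\ =\ \int_{D^F} |u|^p (1+rk)\,dsdr.
\end{equation*}

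Since $1+rk>0$ on $\overline{D^F}$, the definition of $A_p$ yields the pointwise bounds $(1+rk)^{-(p-1)}\ge A_p^{(p-1)/p}$ on the energy integrand and $(1+rk)\le A_p^{-1/p}$ on the mass integrand; these combine in the Rayleigh quotient to produce the factor $A_p^{(p-1)/p}\cdot A_p^{1/p} = A_p$, so that
\begin{equation*}
\mu_1^{odd}(D) \ \ge\ A_p\,\frac{\int_{D^F} |u_s|^p\,dsdr}{\int_{D^F} |u|^p\,dsdr}
\end{equation*}
for any admissible odd eigenfunction $u$.

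The last step is a one-dimensional slicing argument. The symmetries $x(L-s)=-x(s)$, $y(L-s)=y(s)$ imply that the reflection $(x,y)\mapsto(-x,y)$ in Cartesian coordinates corresponds to $(s,r)\mapsto(L-s,r)$ in Fermi coordinates, so an odd eigenfunction satisfies $u(L-s,r)=-u(s,r)$; in particular, for every $r\in(0,\delta)$ the slice $s\mapsto u(s,r)$ is odd about $L/2$, whence $\int_0^L |u(\cdot,r)|^{p-2}u(\cdot,r)\,ds=0$. The sharp one-dimensional Neumann $p$-Poincar\'e inequality $\int_0^L |f'|^p\,ds\ge (\pi_p/L)^p\int_0^L |f|^p\,ds$ therefore applies slice-wise, and integrating in $r$ combined with the previous display yields \eqref{est11}. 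The principal obstacle is the weight bookkeeping above, arranging the powers of $(1+rk)$ so that $A_p$ factors out cleanly; I note that the concavity hypothesis on $k$ does not appear to enter this particular estimate and presumably plays its essential role in the non-constant-$\delta$ generalization or the asymptotic analysis mentioned in the introduction.
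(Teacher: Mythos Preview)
Your argument is correct and in fact takes a cleaner route than the paper's. The paper slices $D^F=(0,L)\times(0,\delta)$ into $n$ thin horizontal strips, freezes $r$ at the bottom of each strip (incurring $O(1/n)$ errors), and then applies the \emph{weighted} one-dimensional inequality of Lemma~\ref{2} with weight $s\mapsto 1+\tfrac{(1+2i)\delta}{2n}\,k(s)$; log-concavity of this weight is precisely where the concavity hypothesis on $k$ is invoked. You instead strip all weights at the outset via the pointwise bounds $(1+rk)^{-(p-1)}\ge A_p^{(p-1)/p}$ and $(1+rk)\le A_p^{-1/p}$, and then apply the \emph{unweighted} inequality $\int_0^L|f'|^p\,ds\ge(\pi_p/L)^p\int_0^L|f|^p\,ds$ slice by slice in $r$ (legitimate because $D^F$ is a product rectangle and each slice is odd about $L/2$). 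Both routes yield the identical constant $A_p(\pi_p/L)^p$, but yours avoids the discretization-and-limit machinery and, as you correctly note, does not use the concavity of $k$ at all---so you have actually proved a slightly stronger statement than Theorem~\ref{t11} as written. The concavity hypothesis is genuinely needed in the paper's treatment of the nonconstant-$\delta$ case (Theorem~\ref{t1}), where the weight also carries $\delta(s)$ and cannot be stripped so cheaply.
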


\begin{theorem}[Nonconstant $\delta$]\label{t1}
Let $p>1$ and assume $\delta(s)$ is nonconstant. Suppose that $\delta(s)$ and either $\delta(s)\,k(s)$ or $\delta(s)^2k(s)$ are concave functions on $[0,L]$, and that $|\delta'(s)|\le 1$  in $[0,L]$. If $1+rk(s)>0$ on $\overline{D^F}$, then 
\begin{equation*}\label{est}
\mu_1^{odd}(D) \ge B_p \left(\frac{\pi_p}{L}\right)^p,
\end{equation*}
where 
\begin{equation}\label{Bp}
B_p= \left\{\begin{array}{ll}
2^{-p/2}\min\left\{1,\displaystyle{\min_{ \overline{D^F}}}\frac{1}{(1+rk(s))^p}\right\} \quad & \mbox{if } 1<p<2,
\\
2^{1-p}\min\left\{1,\displaystyle{\min_{ \overline{D^F}}}\frac{1}{(1+rk(s))^p} \right\}\quad & \mbox{if } p \ge 2,
\end{array}
\right.
\end{equation} 
and $\pi_p$ is defined in \eqref{pi_p}.
\end{theorem}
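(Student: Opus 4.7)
The plan is to use the variational characterization of $\mu_1^{odd}(D)$ in Fermi coordinates and reduce the two-dimensional estimate to a one-dimensional Poincar\'e inequality along curves of constant fractional height. In Fermi coordinates the Rayleigh quotient reads
\[
R(u) = \frac{\int_{D^F} \bigl((1+rk)^{-2} u_s^2 + u_r^2\bigr)^{p/2} (1+rk)\,ds\,dr}{\int_{D^F} |u|^p (1+rk)\,ds\,dr},
\]
and the symmetries of $\gamma$ and $\delta$ convert ``$u$ odd about the $y$-axis'' into $u(L-s, r) = -u(s, r)$, with $k(L-s) = k(s)$ automatic. I would straighten $D^F$ to the rectangle $R = (0, L) \times (0, 1)$ via $t = r/\delta(s)$, setting $v(s, t) = u(s, t\delta(s))$, so that $v(L-s, t) = -v(s, t)$ for every $t \in (0, 1)$.

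The first ingredient is a pointwise lower bound on $|\nabla u|^p$. Writing $A = u_s^2/(1+rk)^2$ and $B = u_r^2$, I use the elementary inequality
\[
(A+B)^{p/2} \ge c_p (\sqrt{A} + \sqrt{B})^p, \qquad c_p = \begin{cases} 2^{-p/2} & \text{if } 1 < p < 2,\\ 2^{1-p} & \text{if } p \ge 2,\end{cases}
\]
which for $1 < p < 2$ follows from $(\sqrt A + \sqrt B)^2 \le 2(A+B)$ and for $p \ge 2$ from superadditivity of $x \mapsto x^{p/2}$ together with Jensen's inequality for the same convex function. This yields
\[
|\nabla u|^p \ge c_p \Bigl(\tfrac{|u_s|}{1+rk} + |u_r|\Bigr)^p,
\]
which is the origin of the factor $c_p$ in $B_p$.

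The second ingredient is a one-dimensional Poincar\'e inequality along the horizontal slices of $R$. For each fixed $t \in (0, 1)$, the function $v(\cdot, t)$ is odd about $L/2$ on $(0, L)$, hence
\[
\int_0^L |v(s, t)|^p\,ds \le \Bigl(\tfrac{L}{\pi_p}\Bigr)^p \int_0^L |v_s(s, t)|^p\,ds,
\]
by the one-dimensional $p$-Laplace Neumann Poincar\'e inequality. Since $v_s = u_s + t\delta'(s) u_r$ at $(s, t\delta(s))$, the constraint $|\delta'| \le 1$ together with $t \le 1$ and the pointwise estimate above give $|v_s|^p \le c_p^{-1} \max\{1, 1+rk\}^p |\nabla u|^p$. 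Integrating the 1D inequality over $t$ against the Jacobian $\delta(s)(1 + t\delta(s) k(s))$, which equals $(1+rk)$ after the change back to $(s, r)$, yields
\[
\int_{D^F} |u|^p (1+rk)\,ds\,dr \le \Bigl(\tfrac{L}{\pi_p}\Bigr)^p \cdot \frac{\max_{D^F}\max\{1, 1+rk\}^p}{c_p} \int_{D^F} |\nabla u|^p (1+rk)\,ds\,dr,
\]
which is exactly the desired bound $\mu_1^{odd}(D) \ge B_p (\pi_p/L)^p$ after observing that $1/\max\{1, 1+rk\}^p = \min\{1, (1+rk)^{-p}\}$.

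The main obstacle is the integration over $t$: the 1D Poincar\'e inequality holds for each fixed $t$ separately, but one must couple it with the $(s, t)$-dependent weight $\delta(s)(1 + t\delta(s) k(s))$. The role of the concavity hypotheses on $\delta$ and on $\delta k$ (or $\delta^2 k$) is precisely to guarantee that this weight, viewed as a function of $s$ for each fixed $t$, is concave, so that the weighted Poincar\'e inequality inherits the constant $(L/\pi_p)^p$ uniformly in $t$. The constraint $|\delta'| \le 1$ is what bounds the cross term $t\delta'(s) u_r$ in $v_s$ and thereby links $|v_s|^p$ to $|\nabla u|^p$ cleanly; without it the straightening would introduce an uncontrolled shear and the factor $c_p$ obtained in the pointwise step would be destroyed.
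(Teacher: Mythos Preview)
Your plan is the continuous version of the paper's argument: where the paper slices $D^F$ into $n$ strips $\{i\,\delta(s)/n<r<(i+1)\,\delta(s)/n\}$, freezes at the bottom edge of each strip, and sends $n\to\infty$ with $O(1/n)$ bookkeeping, you perform the change of variable $t=r/\delta(s)$ outright and work on slices $\{t=\text{const}\}$. The key ingredients coincide exactly---the weighted one-dimensional Poincar\'e inequality (Lemma~\ref{2}) applied with weight $w_t(s)=\delta(s)\bigl(1+t\,\delta(s)k(s)\bigr)$, the bound $|v_s|\le |u_s|+|u_r|$ coming from $|\delta'|\le1$ and $t\le 1$, and the same numerical constant, since the paper's $C_p\cdot 2^{1-p}$ equals your $c_p$.

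Two small points. First, your second-ingredient paragraph states the \emph{unweighted} Poincar\'e inequality in $s$ and then proposes to ``integrate it over $t$ against the Jacobian $\delta(s)(1+t\delta(s)k(s))$''; since that Jacobian depends on $s$, this step is not legitimate as written. You recognize and repair this in your final paragraph by invoking the \emph{weighted} inequality on each slice, which is the correct move and is exactly what the paper does---just state it that way from the outset. Second, under the alternative hypothesis ``$\delta$ and $\delta k$ concave'' the weight $w_t=\delta\cdot(1+t\,\delta k)$ is a product of two positive concave functions and is therefore only \emph{log}-concave, not concave in general; this is precisely what Lemma~\ref{2} requires, so nothing breaks, but your last paragraph should say log-concave.
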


Taken in sum, our main results yield the following corollaries.

\begin{corollary}[Constant $\delta$]\label{t33}
Let $p=2$ and assume $\delta(s)=\delta$ is constant. If $k(s)$ is concave on $[0,L]$ with $1+rk(s)>0$ on $\overline{D^F}$, and one of the conditions (a), (b), or (c) in Theorem \ref{t2} is fulfilled, then
\begin{equation}\label{bbbb}
\mu_1(D)=\mu_1^{odd}(D) \ge A_2 \left(\frac{\pi}{L}\right)^2,
\end{equation}
where 
$A_2= \displaystyle{\min_{ \overline{D^F}}}\frac{1}{(1+rk(s))^2}$. The estimate is sharp, since equality holds in \eqref{bbbb} when $D$ is a rectangle, i.e. $\gamma$ is a segment and $\delta$ is constant.
\end{corollary}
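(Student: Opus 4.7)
The plan is to combine the two preceding theorems. First, since $p=2$ and one of the conditions (a), (b), or (c) of Theorem \ref{t2} is assumed, that theorem directly yields the identification $\mu_1(D)=\mu_1^{odd}(D)$. Second, the hypotheses of Theorem \ref{t11} are in force (constant $\delta$, $k$ concave, and $1+rk(s)>0$ on $\overline{D^F}$), so Theorem \ref{t11} supplies the lower bound
\[
\mu_1^{odd}(D)\;\ge\;A_2\left(\frac{\pi_2}{L}\right)^{2}.
\]
A short calculation from \eqref{pi_p} at $p=2$ gives $\pi_2=2\int_0^{\infty}(1+s^2)^{-1}\,ds=\pi$, which also agrees with the closed form $2\pi(p-1)^{1/p}/(p\sin(\pi/p))$. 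Chaining these two statements produces \eqref{bbbb}.

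For the sharpness claim, I would take $\gamma$ to be the horizontal segment from $(-L/2,0)$ to $(L/2,0)$, parametrized by arc length, together with a constant $\delta(s)\equiv \delta$. Then $D$ is the rectangle $(-L/2,L/2)\times(-\delta,0)$, the signed curvature $k$ vanishes identically, and consequently $A_2=1$ and $1+rk(s)\equiv 1>0$ automatically. Moreover, condition (a) of Theorem \ref{t2} reduces to $\mu_1(D)<1/(4\delta^2)$. By separation of variables, the Neumann eigenvalues on this rectangle are $(m\pi/L)^2+(n\pi/\delta)^2$ with $m,n\geq 0$; when $L>2\pi\delta$ (which certainly forces $L\geq\delta$), the least nonzero one is $(\pi/L)^2$, realized by the eigenfunction $\sin(\pi x/L)$, which is odd with respect to the $y$-axis. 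Therefore $\mu_1(D)=\mu_1^{odd}(D)=(\pi/L)^2=A_2(\pi/L)^2$, and equality in \eqref{bbbb} is attained.

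No step of this argument poses a serious obstacle; the heavy lifting has already been performed in Theorems \ref{t2} and \ref{t11}. The only point requiring genuine care is the consistency check in the sharpness example: one must verify that the sufficient condition \eqref{delta} of Theorem \ref{t2}(a) (which collapses to $L>2\pi\delta$ when $k\equiv 0$) is simultaneously compatible with $L\geq\delta$, so that the first nonzero Neumann eigenvalue of the rectangle actually equals $(\pi/L)^2$ and is realized by an odd eigenfunction.
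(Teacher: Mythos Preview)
Your argument is correct and follows essentially the same approach as the paper, which presents this corollary as an immediate consequence of Theorems~\ref{t2} and~\ref{t11} (``Taken in sum, our main results yield the following corollaries''). You are in fact more careful than the paper about the sharpness claim: you explicitly verify that condition~(a) of Theorem~\ref{t2} reduces to $L>2\pi\delta$ for the rectangle, and that this is compatible with $L\geq\delta$ so that $\mu_1(D)=(\pi/L)^2$ is realized by an odd eigenfunction.
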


\begin{corollary}[Nonconstant $\delta$]\label{t3}
Let $p=2$ and assume $\delta(s)$ is nonconstant. Suppose that $\delta(s)$ and either $\delta(s)k(s)$ or $\delta(s)^2\,k(s)$ are concave functions on $[0,L]$, and  that $|\delta'(s)|\le 1$  in $[0,L]$. If  $1+rk(s)>0$ on $\overline{D^F}$ and one of the alternatives (a), (b) or (c) in Theorem \ref{t2} is fulfilled, then
\begin{equation*}\label{bbb}
\mu_1(D)=\mu_1^{odd}(D) \ge B_2 \left(\frac{\pi}{L}\right)^2,
\end{equation*}
where $B_2= \displaystyle \frac{1}{2} \min \left \{1,\min_{\overline{D^F}}\frac{1}{(1+rk(s))^2}\right \}$. 
\end{corollary}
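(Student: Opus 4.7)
The plan is to derive this corollary as a direct packaging of the two main theorems in the $p=2$ case, with no new arguments required. First I would invoke Theorem \ref{t1} with $p = 2$: the standing hypotheses of the corollary (nonconstant $\delta$, concavity of $\delta$ together with one of $\delta k$ or $\delta^2 k$, the slope bound $|\delta'(s)| \le 1$, and $1+rk(s) > 0$ on $\overline{D^F}$) are precisely the assumptions of Theorem \ref{t1}. Thus
\[
\mu_1^{odd}(D) \ge B_2 \left(\frac{\pi_2}{L}\right)^2.
\]

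Next I would specialize the constants \eqref{pi_p} and \eqref{Bp} to $p = 2$. From the closed form $\pi_p = 2\pi (p-1)^{1/p}/(p\sin(\pi/p))$, one reads off $\pi_2 = \pi$; equivalently, $\int_0^{\infty} (1+s^2)^{-1}\,ds = \pi/2$. Since $p = 2$ lies in the range $p \ge 2$ of the piecewise formula for $B_p$, we have $B_2 = 2^{-1}\min\{1, \min_{\overline{D^F}}(1+rk(s))^{-2}\}$, which matches the quantity named in the corollary's statement.

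Finally I would upgrade the odd eigenvalue to the first nonzero eigenvalue by applying Theorem \ref{t2}. The hypothesis that one of the alternatives (a), (b), (c) is satisfied, together with $1+rk(s)>0$ on $\overline{D^F}$, is exactly what Theorem \ref{t2} requires to conclude $\mu_1(D) = \mu_1^{odd}(D)$. Chaining this equality with the bound from the first step yields
\[
\mu_1(D) = \mu_1^{odd}(D) \ge B_2\left(\frac{\pi}{L}\right)^2,
\]
which is the claim. There is no real obstacle here: the statement is a corollary only in the literal sense, and the only substantive check is the $p=2$ specialization of $\pi_p$ and $B_p$, which is immediate from the definitions.
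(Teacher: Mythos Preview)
Your proposal is correct and is exactly the intended argument: the paper presents this corollary without a separate proof, as it follows immediately from combining Theorem~\ref{t2} (to obtain $\mu_1(D)=\mu_1^{odd}(D)$) with Theorem~\ref{t1} specialized to $p=2$ (to obtain the lower bound, with $\pi_2=\pi$ and $B_2$ as stated).
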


 We finally establish the following result on the asymptotic behavior of $\mu_1(D)$ as the width tends to $0$. Our result requires no concavity assumptions on either $\delta(s)$ or $k(s)$.

\begin{theorem}\label{th:4}
Let $p>1$. For $\varepsilon>0$ let $D_{\varepsilon}$ denote the domain with reference curve $\gamma$ and width function $\varepsilon \delta(s)$. If $1+rk(s)>0$ on $\overline{D^F}$, then we have
$$\lim_{\varepsilon \to 0^+}\mu_1(D_{\varepsilon})=\mu_1(0,L;\delta),$$
 where $\mu_1(0,L;\delta)$ denotes the first nonzero eigenvalue of the weighted Neumann p-Laplace problem
\begin{equation}\label{eq:SLJ}
\left\{
\begin{array}{ll}
-\left(\delta |u'|^{p-2} u'\right)'=\mu \delta |u|^{p-2} u \quad &\mbox{in } (0,L),
\\ \\
u'(0)=u'(L)=0.
\end{array}
\right.
\end{equation}
\end{theorem}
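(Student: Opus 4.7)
The plan is to reduce everything to the fixed reference domain $\Omega=\{(s,t):0<s<L,\,0<t<\delta(s)\}$ by passing to Fermi coordinates and then rescaling the transverse variable via $r=\varepsilon t$. After this change of variables, the eigenvalue $\mu_1(D_\varepsilon)$ is characterized by the Rayleigh quotient
\begin{equation*}
R_\varepsilon(u)=\frac{\dint_0^L\dint_0^{\delta(s)}\left(\dfrac{u_s^2}{(1+\varepsilon t k(s))^2}+\dfrac{u_t^2}{\varepsilon^2}\right)^{p/2}(1+\varepsilon t k(s))\,dt\,ds}{\dint_0^L\dint_0^{\delta(s)}|u|^p(1+\varepsilon t k(s))\,dt\,ds},
\end{equation*}
subject to $\dint_\Omega |u|^{p-2}u\,(1+\varepsilon tk)\,dt\,ds=0$. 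The proof then consists of matching upper and lower bounds on $\mu_1(D_\varepsilon)$.

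For the upper bound $\limsup_{\varepsilon\to 0^+}\mu_1(D_\varepsilon)\le \mu_1(0,L;\delta)$, I let $v$ be a first eigenfunction of \eqref{eq:SLJ} and use as test function $u_\varepsilon(s,t)=v(s)+c_\varepsilon$, where the constant $c_\varepsilon$ is selected so that the two-dimensional orthogonality constraint is satisfied. Existence and uniqueness of $c_\varepsilon$ follow from an intermediate value argument based on the strict monotonicity of $c\mapsto |v+c|^{p-2}(v+c)$, and a first-order perturbation estimate yields $c_\varepsilon\to 0$ as $\varepsilon\to 0^+$. Because $(u_\varepsilon)_t\equiv 0$ and $1+\varepsilon tk\to 1$ uniformly on $\overline{\Omega}$, a direct computation gives $R_\varepsilon(u_\varepsilon)\to\mu_1(0,L;\delta)$.

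For the lower bound, let $u_\varepsilon$ be a first eigenfunction normalized so that $\dint_\Omega |u_\varepsilon|^p(1+\varepsilon tk)\,dt\,ds=1$. From the upper bound and the elementary inequality $(a+b)^{p/2}\ge c_p(a^{p/2}+b^{p/2})$ applied to the numerator, I extract uniform bounds $\|(u_\varepsilon)_s\|_{L^p(\Omega)}\le C$ and $\|(u_\varepsilon)_t\|_{L^p(\Omega)}\le C\varepsilon$. A fibrewise Poincaré inequality on $(0,\delta(s))$ then gives $\|u_\varepsilon-\bar u_\varepsilon\|_{L^p(\Omega)}=O(\varepsilon)$, where $\bar u_\varepsilon(s)$ denotes the transverse mean of $u_\varepsilon$. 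Thus $\{u_\varepsilon\}$ is bounded in $W^{1,p}(\Omega)$ and, along a subsequence, converges weakly in $W^{1,p}(\Omega)$ and strongly in $L^p(\Omega)$ to a function $v$; the estimate on $(u_\varepsilon)_t$ forces $v$ to depend only on $s$, so $v\in W^{1,p}(0,L)$.

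To close, continuity of the Nemytskii operators $u\mapsto|u|^p$ and $u\mapsto|u|^{p-2}u$ under strong $L^p$ convergence, together with the uniform convergence of the metric factor, allows me to pass to the limit in the normalization and in the orthogonality, yielding $\dint_0^L|v|^p\delta\,ds=1$ and $\dint_0^L|v|^{p-2}v\,\delta\,ds=0$, so that $v$ is admissible for \eqref{eq:SLJ}. Combining weak lower semicontinuity of $\|\cdot\|_{L^p(\Omega)}$ on $(u_\varepsilon)_s$ with the pointwise bound $|\nabla u_\varepsilon|^p(1+\varepsilon tk)\ge |(u_\varepsilon)_s|^p(1+\varepsilon tk)^{1-p}$, I conclude
$$\liminf_{\varepsilon\to 0^+}\mu_1(D_\varepsilon)\ge \frac{\dint_0^L|v'|^p\delta\,ds}{\dint_0^L|v|^p\delta\,ds}\ge \mu_1(0,L;\delta).$$
I expect the most delicate steps to be the existence and decay rate for $c_\varepsilon$ when $p\neq 2$, where linearity is unavailable, and the lower semicontinuity step, in which the two-dimensional $p$-energy must be compared against the one-dimensional weighted Dirichlet energy of the limit profile $v$ in a way that absorbs the curvature-dependent factors $1+\varepsilon tk$.
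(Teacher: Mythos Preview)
Your argument is correct and, for the lower bound, essentially identical to the paper's: both rescale to the fixed reference domain, extract a weakly convergent subsequence in $W^{1,p}$, use the bound $\|(v_\varepsilon)_t\|_{L^p}=O(\varepsilon)$ to conclude the limit depends only on $s$, and pass to the limit in the normalization and orthogonality constraints. The curvature factor in the lower-semicontinuity step is handled exactly as you propose, writing $(1+\varepsilon t k)^{1-p}=1+o(1)$ uniformly and using the $L^p$-boundedness of $(v_\varepsilon)_s$.

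The one genuine difference is the upper bound. You enforce the two-dimensional orthogonality by shifting the one-dimensional eigenfunction with a constant $c_\varepsilon$ and then arguing that $c_\varepsilon\to 0$. The paper avoids this entirely by exploiting the standing symmetry hypothesis: since $\delta$ and $k$ are even about $s=L/2$, the first eigenfunction of the weighted one-dimensional problem is odd about $L/2$ (this is established earlier in the paper), so $|u|^{p-2}u\,(1+rk(s))$ is odd in $s$ and the two-dimensional orthogonality holds automatically for the unshifted $u(s)$. Your route is a bit more work but does not depend on symmetry; the paper's is shorter but specific to the symmetric setting assumed throughout. The step you flag as delicate---existence and decay of $c_\varepsilon$---is thus simply bypassed in the paper.
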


We next place our work in the existing literature. By its variational characterization \eqref{varchar}, it is straightforward to obtain rough upper bounds for $\mu_1(D)$ by choosing a suitable test function. If one looks for isoperimetric upper bounds, the most celebrated result is the Szeg\"o-Weinberger inequality (see \cite{S,W} and \cite{H}). On the other hand, finding lower bounds can be much harder because, among other things, there is no monotonicity of Neumann eigenvalues with respect to set inclusion. In the linear case $p=2$ we recall the celebrated estimate by Payne and Weinberger contained in \cite{PW}. There, the authors prove
$$
\mu_1(D)\ge \left(\dfrac{\pi}{d}\right)^2,
$$
where $D$ is a convex domain in $\R^n, \> n \ge 2,$ and $d$ stands for its diameter. It is well-known that such an estimate is sharp and that the convexity assumption cannot be removed. Such a result has been generalized to the nonlinear setting in \cite{FNT}, \cite{V}, where again, the domain must be convex. It is natural to seek lower bounds on $p$-Laplace eigenvalues when $D$ is not convex. It is reasonable to expect that such an estimate will involve geometric quantities related to $D$ other
than the diameter. In \cite{BCT1,BCT2} the authors prove lower bounds for $\mu_1(D)$ in the linear and nonlinear case which involve the isoperimetric constant relative to $D$. There is also a rich line of research in this direction developed by Goldenstein, Ukhlov, and coauthors (see \cite{GU,GPU,Pc} and the references therein). They are able to provide lower bounds for simply connected planar domains in terms of Lebesgue norms of Riemann conformal mappings.  Similar estimates hold in terms of the Cheeger constant (see for instance \cite{AM,Ch,Li}). More references about eigenvalues of elliptic operators with various boundary conditions can be found, for example, in the monographs \cite{H,H1}. 

The remainder of this note is organized as follows. Section 2 is devoted to notation and preliminaries about one-dimensional weighted eigenvalue problems. The proof of Theorem \ref{t2} is contained in Section 3. In Section 4 we prove Theorem \ref{t1} using Fermi coordinates, adapting the slicing technique introduced by Payne and Weinberger in \cite{PW}. This slicing technique allows us to reduce the dimension and estimate $\mu_1^{odd}(\Omega)$ from below in terms of the first nonzero eigenvalue of a one-dimensional weighted $p$-Laplace problem.  The asymptotics of $\mu_1(D)$ as the distance function $\delta$ tends to zero are studied in Section 5.

\section{Notation and preliminary results}

We start this section with a one-dimensional lemma due to Payne and Weinberger \cite{PW}. 

\begin{lemma}\label{pw}
Let $w(s)$ be a positive, concave function on the interval $(0,L)$. Then for any piecewise  continuously differentiable function $v(s)$ that satisfies
$$
\int_0^L v(s)w(s)ds=0,
$$
it follows that
$$
\int_0^L v'(s)^2 w(s)\,ds \ge \frac{\pi^2}{L^2}\int_0^L v(s)^2 w(s)\,ds.
$$
Equivalently,
$$
\inf_{\begin{matrix} v \in W^{1,2}(0,L)\setminus\{0\} \\ \dint_0^L v(s)w(s)\,ds =0\end{matrix}} \frac{\dint_0^L v'(s)^2 w(s)\,ds}{\dint_0^L v(s)^2 w(s)\,ds} \ge \min_{\begin{matrix} v \in W^{1,2}(0,L)\setminus\{0\} \\ \dint_0^L v(s)\,ds=0\end{matrix}} \frac{\dint_0^L v'(s)^2\,ds}{\dint_0^L v(s)^2\,ds}=\frac{\pi^2}{L^2}.
$$
\end{lemma}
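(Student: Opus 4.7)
The plan is to reduce the weighted Wirtinger-type inequality to the classical Dirichlet Poincar\'e inequality on $(0,L)$ via an antiderivative construction, with the concavity of $w$ entering at exactly one decisive step. Unlike a Pr\"ufer/phase-plane analysis of the associated weighted Sturm--Liouville problem (which yields only a tautological identity between $\sqrt{\lambda}\,L-\pi$ and $\int (w'/w)\sin(2\theta)$), this route is genuinely elementary.

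First I would introduce
$$F(s):=\int_0^s v(t)\,w(t)\,dt.$$
By the orthogonality hypothesis, $F(0)=F(L)=0$, and $F'=vw$. Integration by parts (the boundary terms vanish) followed by Cauchy--Schwarz applied with the splitting $v'F=(v'\sqrt{w})(F/\sqrt{w})$ gives
$$\int_0^L v^2 w\,ds=-\int_0^L v'F\,ds,\qquad \Big(\int_0^L v^2 w\,ds\Big)^{\!2}\le \int_0^L (v')^2 w\,ds\cdot \int_0^L \frac{F^2}{w}\,ds.$$
Dividing through, the lemma reduces to the purely analytic inequality
$$\int_0^L \frac{F^2}{w}\,ds\le \Big(\frac{L}{\pi}\Big)^{\!2}\int_0^L v^2 w\,ds.$$

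For this, I set $G:=F/\sqrt{w}$; since $w>0$ and $F(0)=F(L)=0$, $G$ vanishes at both endpoints, and the classical Dirichlet Poincar\'e inequality on $(0,L)$ yields $\int_0^L G^2\,ds\le (L/\pi)^2\int_0^L (G')^2\,ds$. Expanding $G'=F'/\sqrt{w}-Fw'/(2w^{3/2})$ and applying one integration by parts to the cross term (using $2FF'=(F^2)'$ together with $F(0)=F(L)=0$), a direct calculation produces the identity
$$\int_0^L (G')^2\,ds=\int_0^L v^2 w\,ds+\frac{1}{2}\int_0^L \frac{F^2 w''}{w^2}\,ds-\frac{3}{4}\int_0^L \frac{F^2 (w')^2}{w^3}\,ds.$$

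Here is where concavity enters: $w''\le 0$ forces the middle term to be non-positive, while the last term is manifestly non-positive. Therefore $\int (G')^2\,ds\le \int v^2 w\,ds$, which together with the Dirichlet Poincar\'e inequality and the earlier Cauchy--Schwarz bound closes the argument. The main obstacle I anticipate is justifying the integration by parts that involves $w''$ in the generality stated, since a concave $w$ admits $w''$ only as a non-positive Radon measure; I would handle this via a standard mollification, approximating $w$ from above by smooth concave $w_\varepsilon$, establishing the inequality for each $w_\varepsilon$, and passing to the limit. The same density argument covers the mere piecewise $C^1$ regularity of $v$.
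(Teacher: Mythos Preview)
Your argument is correct. The antiderivative $F$, the Cauchy--Schwarz splitting, the substitution $G=F/\sqrt{w}$, the Dirichlet Poincar\'e inequality, and the identity
\[
\int_0^L (G')^2\,ds=\int_0^L v^2 w\,ds+\frac{1}{2}\int_0^L \frac{F^2 w''}{w^2}\,ds-\frac{3}{4}\int_0^L \frac{F^2 (w')^2}{w^3}\,ds
\]
all check out line by line, and the two non-positive terms on the right are precisely where concavity is used. The approximation step you flag (mollifying $w$ to make sense of $w''$ and of the boundary terms when $w$ may vanish at $0$ or $L$) is the right way to handle the stated generality; note also that if $w$ vanishes linearly at an endpoint then $F$ vanishes quadratically there, so $G$ and $G'$ remain bounded and the limiting argument goes through.

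As for comparison: the paper does not supply its own proof of this lemma. It is stated as a known one-dimensional result of Payne and Weinberger, with a reference to \cite{PW}, and is then used as a black box in the proof of Theorem~\ref{t2}. So there is no ``paper's proof'' to compare against; you have provided a complete, self-contained argument where the paper simply cites the literature. For context, the original Payne--Weinberger argument proceeds differently (via a substitution that removes the weight from the Rayleigh quotient and a direct analysis of the resulting unweighted problem); your route through the antiderivative and the Dirichlet Poincar\'e inequality is an alternative that makes the role of concavity especially transparent through the sign of the $w''$ term.
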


An extension of the previous lemma to the nonlinear framework is contained in \cite{FNT} and can be stated as follows (recall the definition of $\mu_1(0,L;w)$ in \eqref{eq:SLJ}).

\begin{lemma}\label{2}
Let $w(s)$ be a  positive, log-concave function on the interval $(0,L)$, and let $p>1$. Then 
\begin{eqnarray}\label{fnt}
 \mu_1(0,L;w)&=&\inf_{\begin{matrix} v \in W^{1,p}(0,L)\setminus\{0\} \\ \dint_0^L |v|^{p-2}v\,w\,ds =0\end{matrix}} \frac{\dint_0^L |v'(s)|^p w(s)\,ds}{\dint_0^L |v(s)|^p w(s)\,ds} 
 \\
 &\ge& \min_{\begin{matrix} v \in W^{1,p}(0,L)\setminus\{0\} \\ \dint_0^L |v|^{p-2}v\,ds=0\end{matrix}} \frac{\dint_0^L |v'(s)|^p\,ds}{\dint_0^L |v(s)|^p\,ds}=\left(\frac{\pi_p}{L}\right)^p, \notag
\end{eqnarray}
with $\pi_p$ defined as in \eqref{pi_p}.
\end{lemma}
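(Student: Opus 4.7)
The plan is to prove the three (in)equalities in \eqref{fnt} in turn. For the first equality, the variational identity for $\mu_1(0,L;w)$, I would invoke the direct method to produce a minimizer $u\in W^{1,p}(0,L)$ of the Rayleigh quotient subject to $\int_0^L |v|^{p-2}v\,w\,ds=0$: the quotient is bounded below, the admissible class is weakly closed in $W^{1,p}(0,L)$ thanks to the compact embedding into $L^p$, and the quotient is sequentially weakly lower semicontinuous by convexity. A Lagrange-multiplier computation then identifies $u$ as a weak solution of \eqref{eq:SLJ} whose eigenvalue coincides with the minimal Rayleigh quotient.

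For the final equality, the value $(\pi_p/L)^p$ is the first nonzero Neumann eigenvalue of the unweighted $p$-Laplacian on $(0,L)$. I would exhibit an explicit minimizer $u(s)=\sin_p(\pi_p(s-L/2)/L)$ built from the $p$-sine function, defined via the inverse of the integral appearing in \eqref{pi_p}. One checks that $u$ satisfies the Neumann boundary conditions, is odd about $s=L/2$ so that $\int_0^L|u|^{p-2}u\,ds=0$, and solves the unweighted $p$-Laplace ODE $(|u'|^{p-2}u')'+(\pi_p/L)^p|u|^{p-2}u=0$, which forces its Rayleigh quotient to equal $(\pi_p/L)^p$. The matching lower bound follows by a nodal splitting/reflection argument that reduces to the Dirichlet problem on each half-interval.

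The middle inequality is the main content; it is the $p$-Laplace extension of Payne–Weinberger's Lemma \ref{pw} and is due to \cite{FNT}. The strategy is to work with the minimizer $u$ from the first step — the orthogonality $\int_0^L |u|^{p-2}u\,w\,ds=0$ forces $u$ to change sign, so that $u$ has a nodal point $s_0\in(0,L)$ and is monotone on each of $(0,s_0)$ and $(s_0,L)$ — and to compare, on each monotonicity region, the weighted Rayleigh quotient to its unweighted counterpart by exploiting the log-concavity of $w$. Concretely, log-concavity provides the key pointwise control on $w'/w$ needed to dominate the weighted quotient by an unweighted quotient through a change of variables built from $w$, after which the unweighted 1D estimate from the previous step closes the argument.

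The main obstacle is handling the $p$-Laplacian nonlinearity. In the linear case ($p=2$), one can rely on Sturm–Liouville spectral theory and the clean integration-by-parts calculus that drives Payne–Weinberger's proof of Lemma \ref{pw}; for $p\ne 2$ those tools fail, and the manipulations must instead be tailored to the Euler–Lagrange ODE $(w|u'|^{p-2}u')'+\mu w|u|^{p-2}u=0$. Following \cite{FNT}, this is overcome by combining the log-concavity of $w$ with the convexity/monotonicity properties of $\sin_p$, which together allow one to reduce to the unweighted one-dimensional eigenvalue and thereby recover the sharp constant $(\pi_p/L)^p$.
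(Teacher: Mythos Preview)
The paper does not supply its own proof of this lemma: it is stated as a known result, with the argument deferred entirely to \cite{FNT}. Your proposal likewise attributes the substantive middle inequality to \cite{FNT} and only sketches the surrounding standard material (the direct method for the variational identity, the $\sin_p$ eigenfunction for the unweighted value $(\pi_p/L)^p$), so in that sense you and the paper agree.

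As a self-contained proof, your outline of the middle inequality is too vague at the decisive step: the phrase ``a change of variables built from $w$'' does not specify any concrete mechanism that would convert the weighted Rayleigh quotient into the unweighted one, and no obvious substitution (e.g.\ $t=\int_0^s w$) does this. The actual argument in \cite{FNT} proceeds by analyzing the Euler--Lagrange ODE for the monotone minimizer and exploiting the monotonicity of $(\log w)'$ rather than a change of variables. Since you explicitly defer the details to \cite{FNT}, however, your treatment is no less complete than the paper's.
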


\smallskip

We next provide a Lyapunov-type estimate for $\mu_1(0,L;w)$ that complements Lemma \ref{2}. On one hand, the Lyapunov estimate is a bit weaker than \eqref{fnt}, but on the other, it does not require a concavity assumption on the weight. 
Results of this form are well-known, though we cannot find a precise statement of this particular result in the literature. For convenience, we include its short proof, since several of the ideas presented here are used in the proof of Theorem \ref{th:4}. For related results see, for instance, \cite{P} and the references therein.

\begin{lemma}\label{lemma_lv}
Let $w(s)$ be a positive, continuous function on  $[0,L]$, even with respect to $\dfrac L 2$, and let $p>1$. Then 
\begin{equation}\label{Lv}
 \mu_1(0,L;w) \ge \frac{\displaystyle \min_{s \in [0,L]} w(s)  }{\dint_0^{\frac{L}{2}}\left(\frac{L}{2}-s\right)^{p-1}w(s)\,ds}.
\end{equation}
\end{lemma}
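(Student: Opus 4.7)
\smallskip
\noindent\emph{Proof proposal.} My plan is to extract a first eigenfunction $u$ of the weighted Neumann problem in \eqref{eq:SLJ} (with $\delta$ replaced by $w$), exploit the symmetry $w(L-s)=w(s)$ to force $u$ to be antisymmetric about $s=L/2$ (so that $u(L/2)=0$), and then derive the bound through a one-sided H\"older estimate followed by Fubini's theorem. Normalize $u$ so that $\int_0^L |u|^p w\,ds=1$, giving $\mu_1(0,L;w)=\int_0^L |u'|^p w\,ds$.

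First I would establish the antisymmetry of $u$. The reflected function $\widetilde{u}(s):=u(L-s)$ solves the same Neumann eigenvalue problem with the same eigenvalue $\mu_1(0,L;w)$, because $w$ is even about $L/2$. By the (standard) simplicity of the first nonzero eigenvalue of the one-dimensional weighted $p$-Laplacian, $\widetilde{u}=cu$ for some $c\in\mathbb{R}$. Applying the involution $s\mapsto L-s$ once more yields $c^2=1$, so $c=\pm1$. The case $c=+1$ would make $u$ even about $L/2$; but an even function has its sign changes arranged in pairs $\{s_0,L-s_0\}$, which is incompatible with the fact that a first nonconstant Neumann eigenfunction changes sign exactly once. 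Hence $c=-1$, so $u(L-s)=-u(s)$ and in particular $u(L/2)=0$.

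With $u(L/2)=0$ in hand, for $s\in[0,L/2]$ H\"older's inequality on $[s,L/2]$ gives
\[
|u(s)|^p=\left|\int_s^{L/2} u'(t)\,dt\right|^p\le (L/2-s)^{p-1}\int_s^{L/2} |u'(t)|^p\,dt.
\]
Multiplying by $w(s)$, integrating over $s\in[0,L/2]$, and interchanging the order of integration by Fubini yields
\[
\int_0^{L/2}|u|^p w\,ds\le \int_0^{L/2}|u'(t)|^p\left[\int_0^t (L/2-s)^{p-1}w(s)\,ds\right]dt\le \left[\int_0^{L/2}(L/2-s)^{p-1}w(s)\,ds\right]\int_0^{L/2}|u'(t)|^p\,dt.
\]
Replacing $\int_0^{L/2}|u'|^p\,dt$ by the upper bound $(\min w)^{-1}\int_0^{L/2}|u'|^p w\,dt$, and doubling both sides using the evenness of $w$ and of $|u|^p$, $|u'|^p$ about $L/2$, produces
\[
\int_0^L |u|^p w\,ds\le \frac{1}{\displaystyle\min_{s\in[0,L]} w(s)}\int_0^{L/2}(L/2-s)^{p-1}w(s)\,ds\int_0^L |u'|^p w\,ds,
\]
which rearranges to \eqref{Lv}.

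The only genuinely delicate step is the antisymmetry argument in the second paragraph, which requires invoking the simplicity of the first nonzero $p$-Laplace Neumann eigenvalue in one dimension together with the unique sign change of its eigenfunction; both are classical but need to be cited. Once that is granted, the remainder is a routine H\"older--Fubini computation combined with the trivial bound $w\ge\min w$.
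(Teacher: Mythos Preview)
Your proof is correct and follows essentially the same route as the paper: establish that a first eigenfunction is odd about $L/2$ (hence vanishes there), apply H\"older's inequality on $[s,L/2]$, and combine with the trivial bound $w\ge\min w$ to arrive at \eqref{Lv}. The only cosmetic differences are that the paper proves the single sign change and monotonicity of $u$ from scratch (via domain monotonicity of the Rayleigh quotient and the antiderivative $F(s)=-w|u'|^{p-2}u'$) before invoking simplicity, whereas you cite these as classical; and the paper bounds $\int_s^{L/2}|u'|^p$ by $\int_0^{L/2}|u'|^p$ before integrating in $s$, while you integrate first and use Fubini---the resulting inequality is identical.
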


\begin{proof}
It is easy to show that if $0<L'<L$, then
\[
\underset{\underset{\dint_0^L|u|^{p-2}u w\,ds=0}{u\in W^{1,p}(0,L)\setminus\{0\}}}{\inf}\frac{\dint_0^L|u'(s)|^p w(s)\,ds}{\dint_0^L|u(s)|^p w(s)\,ds}<\underset{\underset{\dint_0^{L'}|u|^{p-2}u w\,ds=0}{u\in W^{1,p}(0,L')\setminus\{0\}}}{\inf}\frac{\dint_0^{L'}|u'(s)|^p w(s)\,ds}{\dint_0^{L'}|u(s)|^p w(s)\,ds}.
\]
So, if $u$ denotes an eigenfunction for $\mu_1(0,L;w)$, it follows that $u$ changes sign exactly once on the interval $(0,L)$; say $u>0$ on $(0,L_u)$ and $u<0$ on $(L_u,L)$. Defining
\[
F(s)=-w(s) |u'(s)|^{p-2} u'(s)=\mu_1(0,L;w)\int_0^s  |u(t)|^{p-2} u(t) w(t)\,dt,\qquad 0\leq s \leq L,
\]
note that $F(0)=F(L)=0$. Moreover, $F'>0$ on $(0,L_u)$ and $F'<0$ on $(L_u,L)$. It follows that $F>0$ on $(0,L)$ and so $u$ is strictly decreasing on $(0,L)$. By standard arguments  $\mu_1(0,L;w)$ is simple, and since $w$ is even with respect to $s=L/2$, the eigenfunction $u$ is odd with respect to $s=L/2$. Therefore
\begin{equation}\label{eq:mustarhalf}
\mu_1(0,L; \delta)=\frac{\dint_0^L|u'(s)|^p w(s)\,ds}{\dint_0^L|u(s)|^p w(s)\,ds}=\frac{\dint_0^{\frac{L}{2}}|u'(s)|^p w(s)\,ds}{\dint_0^{\frac{L}{2}}|u(s)|^p w(s)\,ds}.
\end{equation}
By H\"older's inequality, we see that for $0\leq t\leq \frac{L}{2}$,
$$
|u(t)|\leq \int_t^{L/2}|u'(s)|\,ds\\
\leq \left(\frac{L}{2}-t\right)^{\frac{1}{p'}}\left(\int_0^{\frac{L}{2}}|u'(s)|^p\,ds\right)^\frac{1}{p},
$$
where $p'$ is the conjugate exponent to $p$. Raising both sides to power $p$ and multiplying by $\displaystyle \left( \min_{t \in [0,L]}w(t) \right)w(t)$, we get
\[
\displaystyle \left( \min_{t \in [0,L]}w(t) \right) |u(t)|^pw(t) \leq \left(\frac{L}{2}-t\right)^{p-1}w(t)\left(\int_0^{\frac{L}{2}}|u'(s)|^p w(s)\,ds\right).
\]
Integrating this inequality from $t=0$ to $t=\frac{L}{2}$ and using \eqref{eq:mustarhalf} completes the proof.
\end{proof}

\begin{remark}
We observe that on one hand \eqref{Lv} does not require any concavity assumption on $w$, but on the other hand it is a bit weaker than \eqref{fnt}. Indeed, recalling that $w$ is even with respect to $\frac L 2$, we have  that for every $p>1$
\[
\frac{ \displaystyle \min_{s \in [0,L]} w(s) }{\dint_0^{\frac{L}{2}}\left(\frac{L}{2}-s\right)^{p-1}w(s)\,ds} 
\le
p\left(\frac 2 L\right)^p,
\]
with equality when $w(s)$ is constant. We can numerically verify that
$$
2p^{\frac 1 p}<\pi_p \quad \Longleftrightarrow \quad \frac{\sin\left(\frac \pi p\right)}{\frac \pi p}<\left(1-\frac 1 p\right)^{\frac 1 p}, \quad p>1.
$$
We plot both sides of the above inequality in Figure 2, where  $x=\frac 1 p$.

\begin{figure}[h]
\begin{centering}
\includegraphics[scale=0.43]{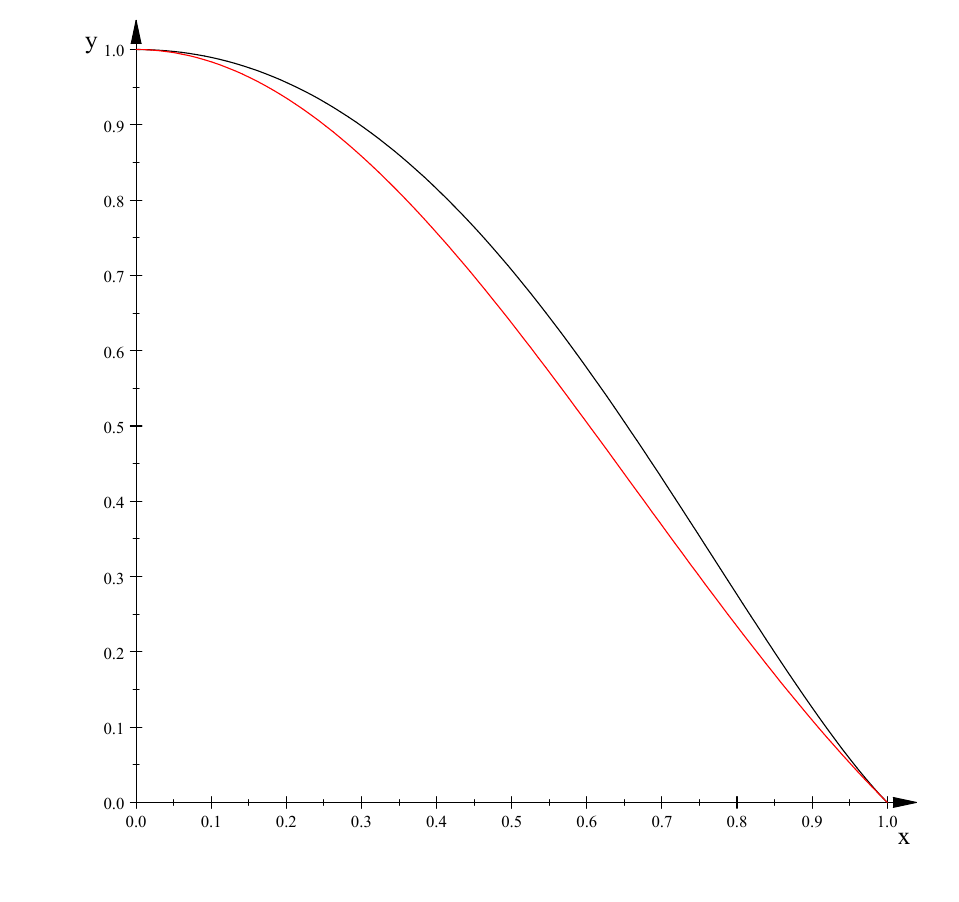}
\includegraphics[scale=0.43]{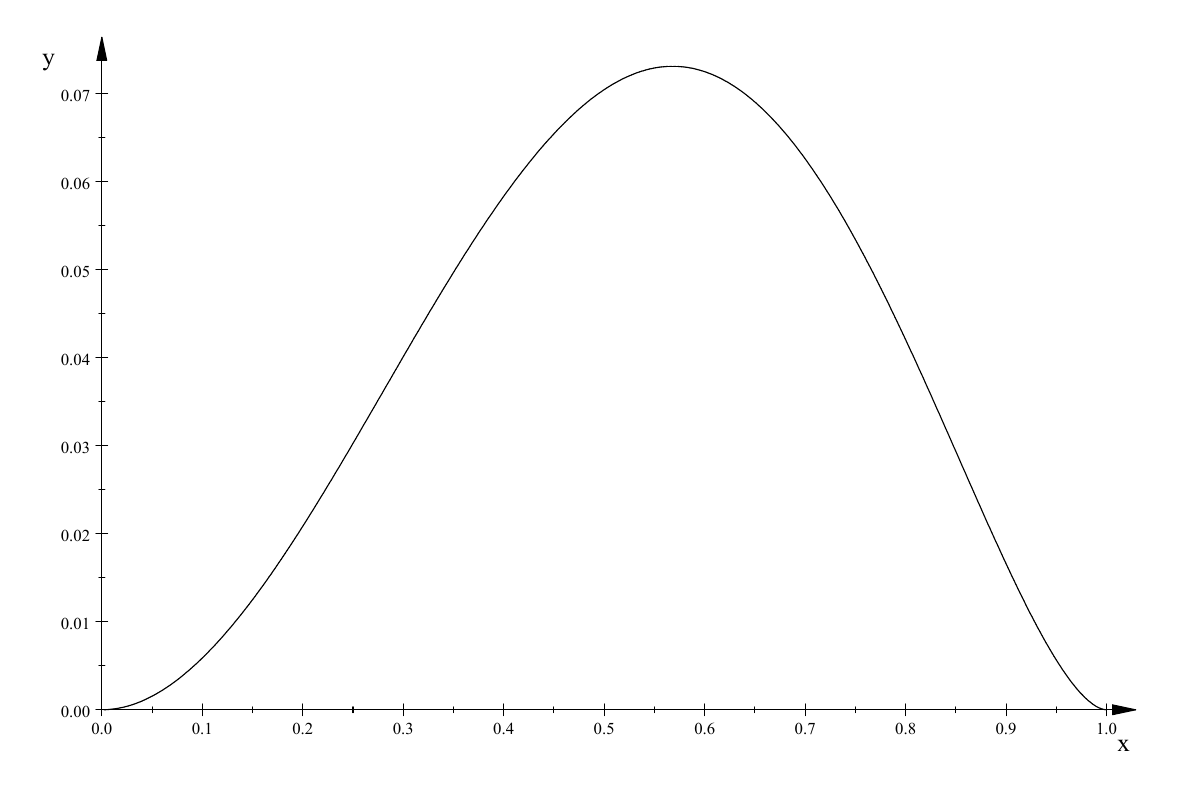}
\end{centering}
\caption{Left: in red the graph of $r(x)=\frac{\sin (\pi x)}{\pi x}$, in black the graph of $b(x)=(1-x)^x$, $x \in [0,1]$.  Right: the graph of $b(x)-r(x)$, $x \in [0,1]$.}
\end{figure}
\end{remark}

\smallskip

We also recall here two easy inequalities that will be used in Section 4.
\begin{proposition}
Let $p>1$. For every $a,b \geq 0$ the following inequalities hold true:
\begin{equation*}
\left(a^2+b^2\right)^{p/2}\ge C_p\left(a^p+b^p\right),
\end{equation*}
with
\begin{equation}
C_p=\left\{
\begin{array}{ll}
2^{\frac{p-2}{2}} \quad & \mbox{if } 1<p<2,
\\
1 & \mbox{if } p \ge 2,
\end{array}
\right. \label{cp}
\end{equation}
and
\begin{equation}
a^p+b^p \ge 2^{1-p}(a+b)^p. \label{cp1}
\end{equation}
\end{proposition}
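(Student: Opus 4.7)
The statement packages two standard power-mean inequalities on $[0,\infty)^2$; I would prove the two parts independently, each by a short one-variable convexity argument.

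For the first inequality, both sides are positively $p$-homogeneous in $(a,b)$, and the case $a=b=0$ is trivial, so I may normalize $a^2+b^2=1$ and write $a^2=\lambda$, $b^2=1-\lambda$ with $\lambda\in[0,1]$. The claim then reduces to
\[
\lambda^{p/2}+(1-\lambda)^{p/2}\le \frac{1}{C_p}.
\]
When $p\ge 2$, the exponent $p/2\ge 1$ so $t^{p/2}\le t$ on $[0,1]$, hence the left-hand side is bounded by $\lambda+(1-\lambda)=1=1/C_p$. When $1<p<2$, the function $t\mapsto t^{p/2}$ is strictly concave; the symmetric sum $\lambda^{p/2}+(1-\lambda)^{p/2}$ attains its maximum on $[0,1]$ at $\lambda=\tfrac12$ (either by Jensen's inequality or by inspection of the derivative $\tfrac{p}{2}\bigl(\lambda^{p/2-1}-(1-\lambda)^{p/2-1}\bigr)$, which changes sign only at $\lambda=\tfrac12$), yielding the maximum value $2\cdot 2^{-p/2}=2^{1-p/2}=1/C_p$. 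Note the two branches match at $p=2$, where each gives $C_p=1$.

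For the second inequality, convexity of $t\mapsto t^p$ on $[0,\infty)$ (valid for $p\ge 1$) applied to the midpoint via Jensen's inequality gives
\[
\left(\frac{a+b}{2}\right)^{p} \le \frac{a^{p}+b^{p}}{2},
\]
which rearranges to $a^p+b^p\ge 2^{1-p}(a+b)^p$.

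The main obstacle: there is essentially none. The proposition is an elementary preliminary, packaged here for later use in Section~4, where the bound \eqref{cp} converts sums of $p$-th powers of partial derivatives (arising from the Fermi-coordinate expression of $|\nabla u|^2$) into the $p/2$-th power of the sum needed to reconstruct $|\nabla u|^p$, while \eqref{cp1} is the complementary estimate used to handle cross terms in the slicing argument. The only micro-check worth flagging is the boundary match at $p=2$ already noted above.
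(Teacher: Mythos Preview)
Your argument is correct in every detail. Note, however, that the paper does not actually supply a proof of this proposition: it is introduced with the words ``We also recall here two easy inequalities that will be used in Section~4'' and then simply stated. So there is no approach to compare against; your write-up fills in precisely the routine convexity/concavity verification that the authors left to the reader.
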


\medskip

\section{Proof of Theorem \ref{t2}}

Our proof is inspired by that of Proposition 3.2 in \cite{BCDL}.

\begin{proof}[Proof of Theorem \ref{t2}]
Arguing by contradiction, suppose that there are no odd eigenfunctions for $\mu _{1}(D)$. Let $v(x,y)$ denote a fixed eigenfunction for $\mu _{1}(D)$ and define
\[
u(x,y)=v(x,y)+v(-x,y).
\]
Then $u$ is not identically zero, and hence is an even eigenfunction for $\mu_1(D)$. From now on we will denote by $D^+=\{u>0\}$, by $\Gamma=\{u=0\}$ the nodal set of $u$, by $\gamma^\delta$ the curve
$$
\gamma^\delta=\{(x(s)+\delta(s)y'(s),y(s)-\delta(s)x'(s)):s \in [0,L]\}.
$$ 
Observe that $\partial D =\gamma\cup\gamma^\delta\cup \mathcal{S}$, where $\mathcal{S}$ denotes the union of two symmetric line segments connecting the curves $\gamma$ and $\gamma^\delta$. 
We first recall that $\Gamma$ cannot be a closed curve, since otherwise, if we denote by $\lambda_1(D^+)$ the first eigenvalue  of the Dirichlet-Laplacian in $D^+$ we get
$$
\mu_1(D)=\lambda_1(D^+)>\lambda_1(D)>\mu_1(D),
$$
reaching a contradiction. By Courant's Theorem  on the number of nodal domains, we immediately exclude the possibility that $\Gamma$ could join $\gamma$ and $\gamma^\delta$. Precisely one of the following possibilities therefore occurs (see Figure 3):
\begin{itemize}
\item[$(i)$] $\partial D \cap \Gamma=\{P_L,P_R\} \subset \gamma$;
\item[$(ii)$] $\partial D \cap \Gamma=\{P_L,P_R\} \subset \mathcal{S}$; or
\item[$(iii)$] $\partial D \cap \Gamma=\{P_L,P_R\} \subset  \gamma^\delta$. 
\end{itemize}

\begin{figure}[h]
\includegraphics[scale=0.229]{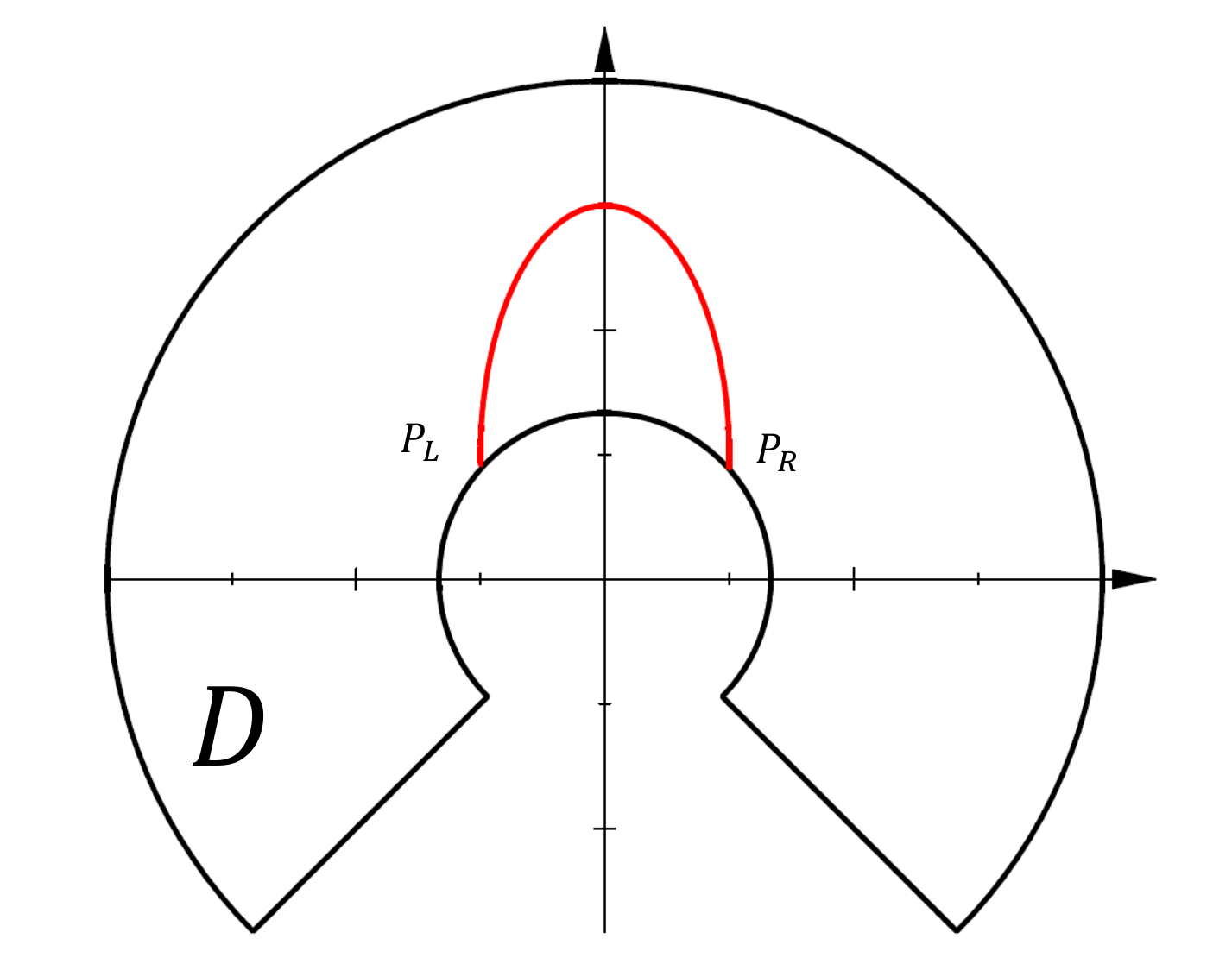}
\includegraphics[scale=0.229]{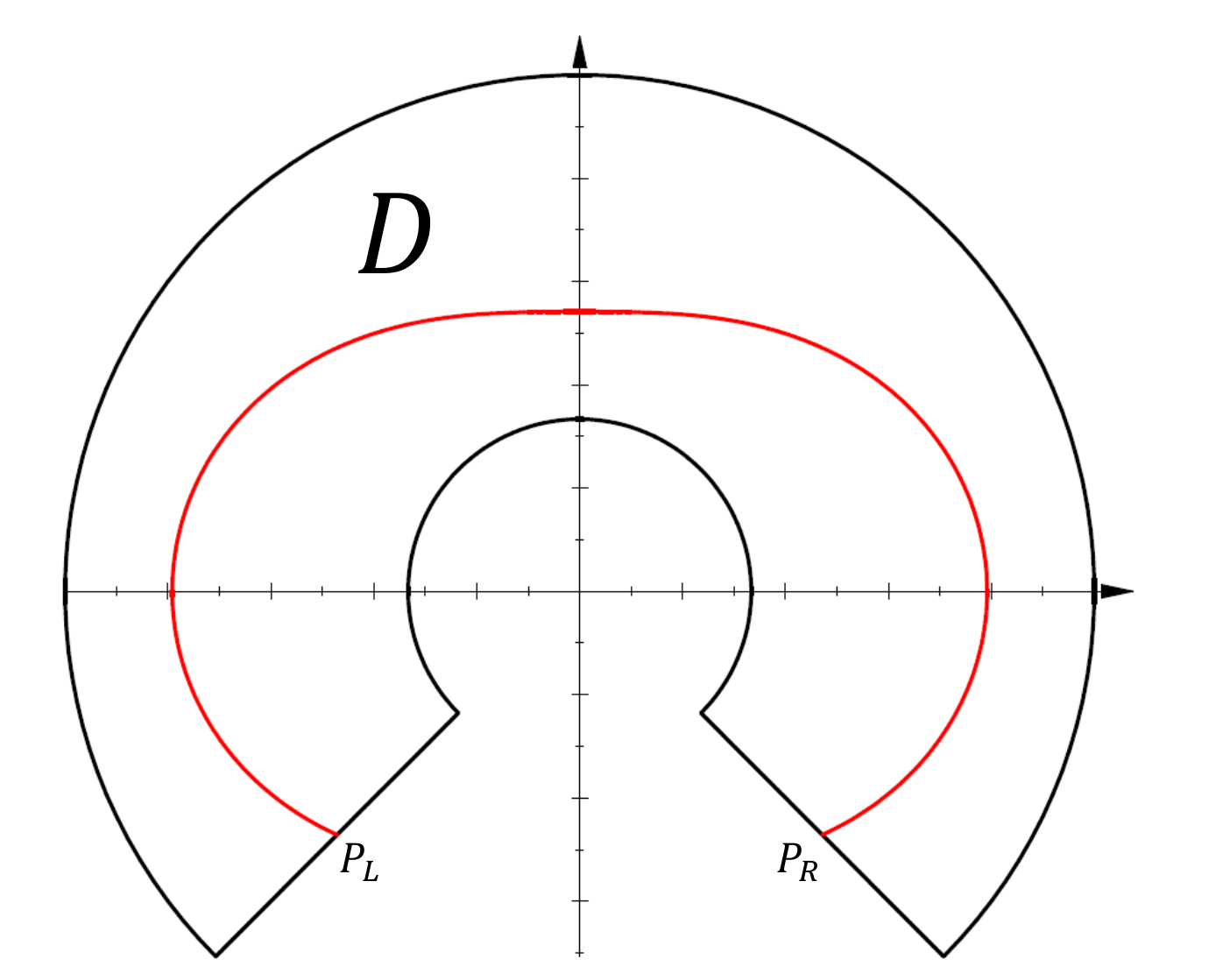}
\includegraphics[scale=0.229]{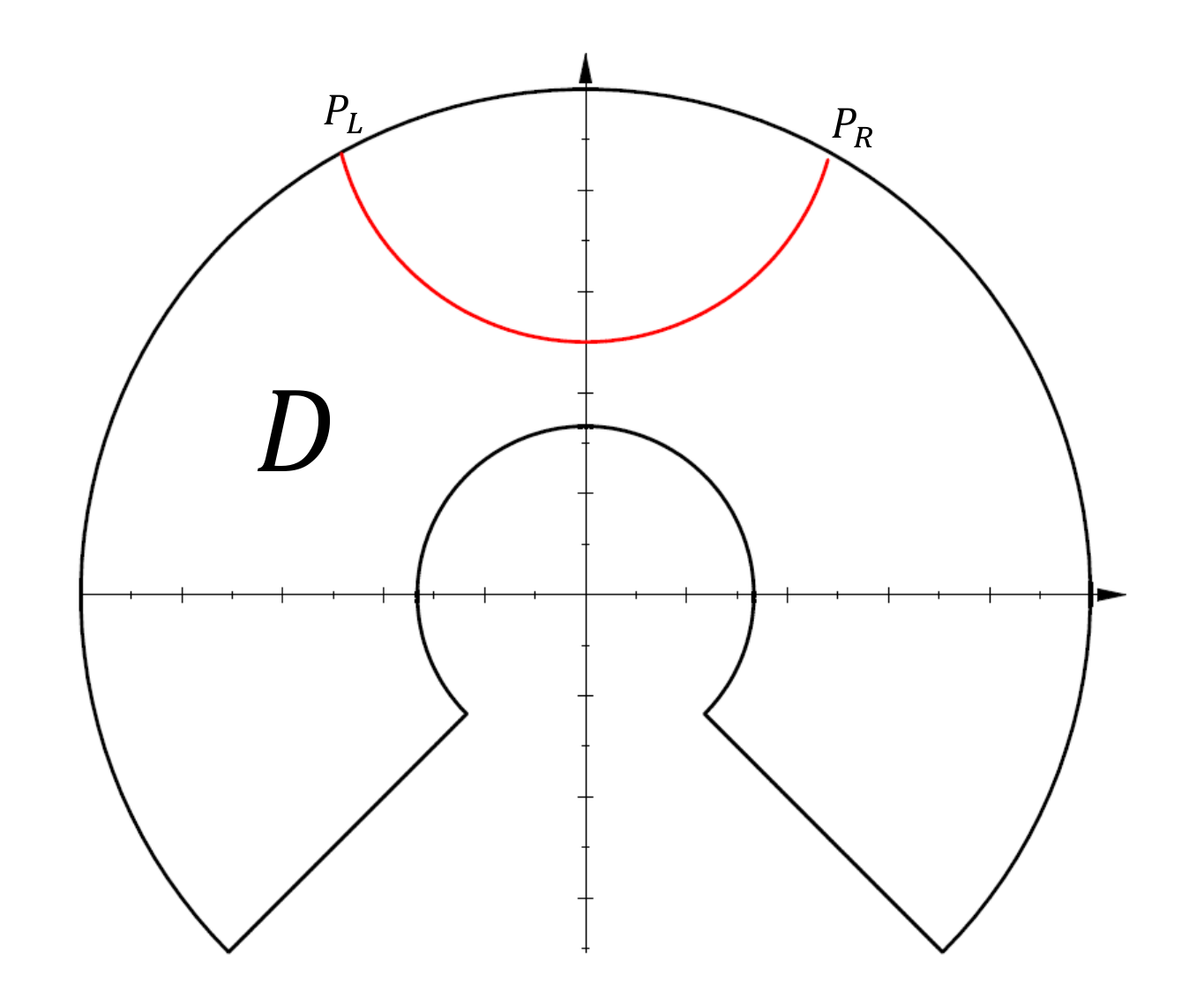}
\caption{On the left: Case $(i)$; in the middle: Case $(ii)$; on the right: Case $(iii)$}
\end{figure}

Clearly, due to the symmetry of $u$, $P_L$ and $P_R$ are symmetric to each other with respect to the $y$-axis. We distinguish three possibilities: $(a)$ $k(s) \ge 0$ for all $s \in [0,L]$; $(b)$ $k(s)<0$ for all $s \in [0,L]$; $(c)$ $k$ changes its sign in $[0,L]$.

$(a,i)$ We consider the mixed Dirichlet-Neumann eigenvalue problem
\begin{equation}\label{ND}
\left\{
\begin{array}{ll}
-\Delta \psi = \lambda \psi & \mbox{in}\> D,
\\ \\
\frac{\partial \psi}{\partial \mathbf{n}}=0 &\mbox{on} \>\wideparen{P_LP_R},
\\ \\
\psi=0 & \mbox{on} \>\partial D\setminus \wideparen{P_LP_R}.
\end{array}
\right.
\end{equation}
Here, $\wideparen{P_LP_R}$ denotes the portion of $\gamma$ connecting $P_L$ and $P_R$. By replacing $u$ with $-u$ if necessary, we assume $\partial D_+\cap \gamma =\wideparen{P_LP_R}$.  Write $\lambda^{ND}(D)$ for the lowest eigenvalue of problem \eqref{ND} with $\psi$ a corresponding eigenfunction. Then $u_+$ (the positive part of $u$) is a  valid trial function in the Rayleigh quotient for $\lambda^{ND}(D)$, so we see
\begin{equation*}\label{bb}
\mu_1(D)=\dfrac{\dint_{D_+} |\nabla u|^2 dxdy}{\dint_{D_+} u^2 dxdy}=\dfrac{\dint_{D} |\nabla  u_+|^2 dxdy}{\dint_{D}  u_+^2 dxdy} \geq \lambda^{ND}(D)=\dfrac{\dint_{D} |\nabla \psi|^2 dxdy}{\dint_{D} \psi^2 dxdy}.
\end{equation*}
Passing to Fermi coordinates, the estimate above yields
\begin{equation}\label{Fubini1}
\mu _{1}(D)\geq \frac{\dint_0^L \left(\dint_0^{\delta(s)} \psi_r^2 (1+rk(s))dr\right)ds}{\dint_0^L \left(\dint_0^{\delta(s)} \psi^2 (1+rk(s))dr\right)ds}.
\end{equation}
Observe that $\psi(\delta(s),s)=0$ for $s\in [0,L]$. Fix $s \in [0,L]$ and extend $\psi$ and $1+rk(s)$ from $[0,\delta(s)]$ to $[0,2\delta(s)]$ by odd and even reflection across $r=\delta(s)$, respectively. The even (linear) extension of $1+rk(s)$ is concave in $r$, courtesy of the assumption $k\geq 0$. An application of Lemma \ref{pw} gives 
$$
\int_0^{\delta(s)}\psi_r^2(1+rk(s))dr\geq \frac{\pi^2}{4\delta(s)^2}\int_0^{\delta(s)}\psi^2(1+rk(s))dr
$$
which implies
\begin{equation*}
\dint_0^L\left(\dint_0^{\delta(s)} \psi_r^2 (1+rk(s))dr\right)ds \ge  \dfrac{\pi^2}{4\underset{s\in [0,L]}{\max}\delta(s)^2}\dint_0^L\left(\dint_0^{\delta(s)} \psi^2 (1+rk(s))dr\right)ds.
\end{equation*}
Taken in sum, the inequality immediately above, inequality \eqref{Fubini1}, and $k\geq 0$ together imply
\begin{equation*}\label{lb1}
\mu_1(D) \ge \frac{\pi^2}{4\underset{s\in [0,L]}{\max}\delta(s)^2} \ge \dfrac{1}{\underset{s\in [0,L]}{\max} (2\delta(s)+\delta(s)^2 k(s))^2},
\end{equation*}
contradicting \eqref{delta}.

$(a, ii-iii)$ In case $(ii)$ let $\wideparen{P_LP_R}$ denote the portion of $\partial D$ connecting the points $P_L$ and $P_R$ which intersects $\gamma^\delta$; in case $(iii)$ let $\wideparen{P_LP_R}$ denote the portion of $\gamma^\delta$ connecting $P_L$ to $P_R$. We continue to let $\lambda^{ND}(D)$ denote the lowest eigenvalue of problem \eqref{ND} with our new definition of $\wideparen{P_LP_R}$. Arguing as in case $(a,i)$, we assume that $\partial D_+\cap (\gamma^\delta\cup\mathcal{S})=\wideparen{P_LP_R}$. Moreover, the arguments used to establish \eqref{Fubini1} carry over without change. Since $\wideparen{P_LP_R}\cap \gamma=\emptyset$, $\psi$ vanishes whenever $r=0$.  Fix $s \in [0,L]$. If $k(s)=0$, extend $\psi$ to $[-\delta(s),\delta(s)]$ by odd reflection through $r=0$. Lemma \ref{pw} gives 
\begin{equation}\label{iii}
\int_0^{\delta(s)}\psi_r^2dr\geq \frac{\pi^2}{4\delta(s)^2}\int_0^{\delta(s)}\psi^2dr.
\end{equation}
If $k(s)>0$, we must work a bit harder. To this end, define
\begin{eqnarray}\label{b1squared}
B_1(s)^2&=&\max_{r \in [0,\delta(s)] }\left(\int_r^{\delta(s)} (1+tk(s))dt\right)\left(\int_0^r \dfrac{1}{1+tk(s)}dt\right)
\\
&=&
\displaystyle\max_{r \in [0,\delta(s)] } (\delta(s) -r)\left(1+(\delta(s)+r)\dfrac{k(s)}{2}\right)\dfrac{\log(1+rk(s))}{k(s)}. \nonumber
\end{eqnarray}
The estimate provided by \cite[p. 40, Thm. 1]{M} gives
\begin{equation}\label{bar}
\dint_0^{\delta(s)} \psi_r^2 (1+rk(s))dr \ge C_1(s) \dint_0^{\delta(s)} \psi^2 (1+rk(s))dr ,
\end{equation}
where 
$$ C_1(s) \ge \dfrac{1}{4B_1(s)^2}.$$
A computation shows that
\begin{equation}\label{b11}
 B_1(s)^2\le   \left(\delta(s)+\delta(s)^2\dfrac{k(s)}{2}\right)^2
\end{equation}
from which we deduce
\begin{equation}\label{cs}
C_1(s) \ge \dfrac{1}{(2\delta(s)+\delta(s)^2 k(s))^2}.
\end{equation}

We can summarize \eqref{iii}, \eqref{bar}, and \eqref{cs} as follows:
$$
\dint_0^{\delta(s)} \psi_r^2 (1+r k(s))dr \ge  \left\{\begin{array}{ll} \dfrac{\pi^2}{ 4\delta(s)^2}\dint_0^{\delta(s)} \psi^2 (1+rk(s))dr & \mbox{if}\> k(s)=0, \\ \\ \dfrac{1}{(2\delta(s)+ \delta(s)^2 k(s))^2}\dint_0^{\delta(s)} \psi^2 (1+ r k(s))dr & \mbox{if}\> k(s)>0. \end{array}\right. 
$$
Hence, for $k(s)\geq0$ we have
$$
\dint_0^{\delta(s)} \psi_r^2 (1+r k(s))dr \geq \frac{1}{(2\delta(s)+\delta(s)^2 k(s))^2} \dint_0^{\delta(s)} \psi^2 (1+rk(s))dr.
$$
The inequality immediately above and inequality \eqref{Fubini1} together imply
\begin{equation*}\label{lb2}
\mu_1(D) \geq \dfrac{1}{\underset{s\in [0,L]}{\max} (2\delta(s)+\delta(s)^2 k(s))^2}
\end{equation*}
which again contradicts \eqref{delta}.

\medskip

$(b,i)$   The work of case $(a,i)$ applies verbatim to give \eqref{Fubini1}. We again employ the estimate \cite[p. 40, Thm. 1]{M} to bound the right-hand side of \eqref{Fubini1} .  We fix $s \in [0,L]$ and choose $\mu=\nu=(1 + (\delta(s)-x)k(s))\textbf{1}_{[0,\delta(s)]}(x)$ in this result.  If we define
\begin{eqnarray*}
B_2(s)^2 &=&\max_{r \in [0,\delta(s)] }\left(\int_0^{\delta(s)-r} (1+t k(s))dt\right)\left(\int_{\delta(s)-r}^{\delta(s)} \dfrac{1}{1+t k(s)}dt\right)
\\
&=&\max_{r \in [0,\delta(s)] } (\delta(s) -r)\left(1+(\delta(s)-r)\dfrac{k(s)}{2}\right)\dfrac{1}{(-k(s))}\log\left(\dfrac{1+(\delta(s)-r)k(s)}{1+\delta(s) k(s)}\right),
\end{eqnarray*}
then
\begin{equation}\label{bar2}
\dint_0^{\delta(s)} \psi_r^2 (1+r k(s))dr \ge C_2(s) \dint_0^{\delta(s)} \psi^2 (1+r k(s))dr,
\end{equation}
with
$$ C_2(s) \ge \dfrac{1}{4B_2(s)^2}.$$
It can be easily shown that
\[
B_2(s)^2\leq \frac{\delta(s)^2}{(1+\delta(s)k(s))^2}
\]
and so
$$
C_2(s) \ge \dfrac{(1+\delta(s) k(s))^2}{4\delta(s)^2}.
$$
The inequality immediately above and \eqref{bar2} together imply
$$
\dint_0^{\delta(s)} \psi_r^2 (1+rk(s))dr \ge \dfrac{(1+\delta(s) k(s))^2}{ 4\delta(s)^2} \dint_0^{\delta(s)} \psi^2 (1+ r k(s))dr.
$$
Pairing this inequality with \eqref{Fubini1} gives
\begin{equation*}\label{lb3}
\mu_1(D) \ge \min_{s\in [0,L]}\dfrac{(1+\delta(s) k(s))^2}{4\delta(s)^2},
\end{equation*}
which contradicts \eqref{delta1}.

$(b,ii-iii)$ Again, $\psi$ vanishes when $r=0$. Using the definition of $B_1(s)^2$ in equation \eqref{b1squared}, the estimate given in \eqref{b11} is replaced by the estimate
$$
B_1(s)^2 \le \dfrac{\delta(s)^2}{(1+\delta(s) k(s))^2},
$$
which again contradicts \eqref{delta1}.

\medskip
$(c)$ If the curvature $k$ changes its sign in $[0,L]$, we combine the work above from cases $(a,i), (a,ii-iii), (b,i), (b,ii-iii)$. 
\end{proof}

 \begin{remark}\label{remarkopen}
We stress that  \eqref{delta}, \eqref{delta1}, \eqref{delta2} give concrete conditions ensuring that $\mu_1(D)$ and $\mu_1^{odd}(D)$ coincide. All the arguments above work if one replaces $\mu_1(D)$ in  \eqref{Fubini1} with an upper bound, easily found by choosing a suitable test function. For instance, one  can choose $\cos\left(\dfrac{\pi}{L}s\right)$ as a test function in the following variational formulation of $\mu_1(D)$ written in Fermi coordinates:
$$
\mu_1(D)
=\min\left\{  \frac{\dint_0^L \left(\dint_0^{\delta(s)} \left[\dfrac{\psi_s^2}{(1+rk(s))^2}+\psi_r^2\right] (1+rk(s))dr\right)ds}{\dint_0^L \left(\dint_0^{\delta(s)} \psi^2 (1+rk(s))dr\right)ds}\right\},
$$
where the minimum is taken over the set of functions $\psi\in W^{1,2}(D)\setminus\{0\}$ having zero mean value in $D$, that is,
$$
\int_0^L\left(\int_0^{\delta(s)}\psi (1+rk(s))\,dr\right)ds=0.
$$
Hence we obtain
$$
\mu_1(D) \le \frac{\pi^2}{L^2} \frac{\dint_0^L\left(\dint_0^{\delta(s)} \sin\left(\frac{\pi}{L} s\right)^2\frac{1}{1+rk(s)}dr\right)ds}{\dint_0^L\left(\dint_0^{\delta(s)} \cos\left(\frac{\pi}{L} s\right)^2(1+rk(s))dr\right)ds}.
$$
Therefore, to ensure $\mu_1(D)=\mu_1^{odd}(D)$, we can replace inequality \eqref{delta} with 
 \begin{equation*}\label{delta11}
\underset{s\in[0,L]}{\max} (2\delta(s)+\delta(s)^2 k(s))^2
<  \dfrac{L^2}{\pi^2}\> \frac{\dint_0^L\left(\dint_0^{\delta(s)} \cos\left(\frac{\pi}{L} s\right)^2(1+rk(s))dr\right)ds}{\dint_0^L\left(\dint_0^{\delta(s)} \sin\left(\frac{\pi}{L} s\right)^2\frac{1}{1+rk(s)}dr\right)ds}
\end{equation*}
in case (a); inequality \eqref{delta1} with
\begin{equation*}\label{delta12}
\underset{s\in[0,L]}{\max}\dfrac{4\delta(s)^2}{ (1+\delta(s) k(s))^2}<  \dfrac{L^2}{\pi^2}\> \frac{\dint_0^L\left(\dint_0^{\delta(s)} \cos\left(\frac{\pi}{L} s\right)^2(1+rk(s))dr\right)ds}{\dint_0^L\left(\dint_0^{\delta(s)} \sin\left(\frac{\pi}{L} s\right)^2 \frac{1}{1+rk(s)}dr\right)ds}; 
\end{equation*}
in case (b); inequality \eqref{delta2} with
\begin{eqnarray}\label{delta13}
&\max\left\{  \underset{s\in[0,L]}{\max} (2\delta(s)+\delta(s)^2 k(s))^2,  \underset{s\in[0,L]}{\max}\dfrac{4\delta(s)^2}{ (1+\delta(s) k(s))^2}\right\}
\\
&\hskip6cm<  \dfrac{L^2}{\pi^2}\> \frac{\dint_0^L\left(\dint_0^{\delta(s)} \cos\left(\frac{\pi}{L} s\right)^2(1+rk(s))dr\right)ds}{\dint_0^L\left(\dint_0^{\delta(s)} \sin\left(\frac{\pi}{L} s\right)^2\frac{1}{1+rk(s)}dr\right)ds} \notag
\end{eqnarray}
in case (c).
 \end{remark}

\medskip

\section{Proofs of Theorems \ref{t11} and \ref{t1}}\label{sect:t1}

Our strategy is to first prove Theorem \ref{t1}. We then use some of the ideas presented there to establish Theorem \ref{t11}.

\begin{proof}[Proof of Theorem \ref{t1}]

Using the density of smooth functions up to the boundary in Sobolev spaces on Lipschitz domains, it will be enough to prove that
$$
\dfrac{\dint_D |\nabla u|^p \,dxdy}{\dint_D |u|^p\, dxdy} \ge B_p\, \left(\frac{\pi_p}{L}\right)^p
$$
when $u$ is odd and smooth on $\overline{D}$. Let $u$ be any such function. We write $D$ in Fermi coordinates and we slice $D$ into thin pieces so that $u$ has zero mean on each slice.  

Fix $n \in \N$ and for any $i=0,\dots,n-1$, write
$$D_i^F=\left\{(s,r)\in \R^2: \> 0 < s < L, \> \frac{i\,\delta(s)}{n}< r < \frac{(i+1)\,\delta(s)}{n}\right\}.$$
Write $D_i$ for the corresponding subset of $D$ via the Fermi coordinate transformation. Since $u$ is odd, we have 
$$ 
\int_{D_i} |u|^{p-2}u\,dxdy=0 \quad \text{for all } i=0,\dots,n-1.
$$

\noindent Using Fermi coordinates, we estimate the energy of $u$ in any $D_i$ (see the Appendix to \cite{BCDL}):
\begin{eqnarray*}
\int_{D_i} |\nabla u|^p dxdy &=& \int_0^L\left(\int_{\frac{i\,\delta(s)}{n}}^{\frac{(i+1)\,\delta(s)}{n}}\left(\frac{1}{(1+rk(s))^2}u_s(s,r)^2+u_r(s,r)^2\right)^{p/2}(1+rk(s))\,dr\right)ds
\\
&\ge&\min\left\{1, \min_{\overline{D_i^F}}\frac{1}{(1+rk(s))^p}\right\}\int_0^L\left(\int_{\frac{i\,\delta(s)}{n}}^{\frac{(i+1)\,\delta(s)}{n}}\left(u_s(s,r)^2+u_r(s,r)^2\right)^{p/2}(1+rk(s))\,dr\right)ds
\\
&\ge&C_p\min\left\{1, \min_{\overline{D_i^F}}\frac{1}{(1+rk(s))^p}\right\}\int_0^L\left( \int_{\frac{i\,\delta(s)}{n}}^{\frac{(i+1)\,\delta(s)}{n}}\left(|u_s(s,r)|^p+|u_r(s,r)|^p\right)(1+rk(s))\,dr\right)ds,
\end{eqnarray*} 
with $C_p$ defined as in \eqref{cp}.
We denote
$$
\int_0^L\left( \int_{\frac{i\,\delta(s)}{n}}^{\frac{(i+1)\,\delta(s)}{n}}\left(|u_s(s,r)|^p+|u_r(s,r)|^p\right)(1+rk(s))\,dr\right)ds=I_{1,i}+I_{2,i}+I_{3,i}+I_{4,i}+I_{5,i},
$$
where
\begin{align*}
I_{1,i}&=\int_0^L\left(\int_{\frac{i\,\delta(s)}{n}}^{\frac{(i+1)\,\delta(s)}{n}}\left(|u_s(s,r)|^p-\left|u_s\left(s,\frac{i\,\delta(s)}{n}\right)\right|^p\right)(1+rk(s))dr\right)ds,\\
I_{2,i}&=\int_0^L\left(\int_{\frac{i\,\delta(s)}{n}}^{\frac{(i+1)\,\delta(s)}{n}}\left|u_s\left(s,\frac{i\,\delta(s)}{n}\right)\right|^p(1+rk(s))dr\right)ds =\int_0^L\left|u_s\left(s,\frac{i\,\delta(s)}{n}\right)\right|^p\frac{\delta(s)}{n}\left(1+\frac{1+2i}{2} \frac{\delta(s)}{n}k(s) \right)ds,\\
I_{3,i}&=\int_0^L\left(\int_{\frac{i\,\delta(s)}{n}}^{\frac{(i+1)\,\delta(s)}{n}}\left(|u_r(s,r)|^p-\left|u_r\left(s,\frac{i\,\delta(s)}{n}\right)\right|^p\right)(1+rk(s))dr\right)ds,\\
I_{4,i}&=\int_0^L\left(\int_{\frac{i\,\delta(s)}{n}}^{\frac{(i+1)\,\delta(s)}{n}}\left|u_r\left(s, \frac{i\,\delta(s)}{n}\right)\right|^p(1+rk(s))\,dr\right)ds-\int_0^L\left|u_r\left(s, \frac{i\,\delta(s)}{n}\right)\frac{i\,\delta'(s)}{n}\right|^p \frac{\delta(s)}{n}\left(1+\frac{1+2i}{2} \frac{\delta(s)}{n}k(s) \right)ds \\
&=\int_0^L\left|u_r\left(s, \frac{i\, \delta(s)}{n}\right)\right|^p\frac{\delta(s)}{n}\left(1+\frac{1+2i}{2} \frac{\delta(s)}{n}k(s) \right)\left(1-\frac{i^p\, |\delta'(s)|^p}{n^p}\right)\,ds,\\
I_{5,i}&=\int_0^L \left|u_r\left(s, \frac{i\, \delta(s)}{n}\right)\right|^p\frac{i^p\, |\delta'(s)|^p}{n^p}\frac{\delta(s)}{n}\left(1+\frac{1+2i}{2} \frac{\delta(s)}{n}k(s) \right)ds.
\end{align*}
Since we may assume $|u_s|^p$ and $|u_r|^p$ are Lipschitz continuous functions, then 
\begin{equation}\label{i1}
|I_{1,i}| \le O\left(\frac 1 n \right)\,|D_i|, \quad|I_{3,i}| \le O\left(\frac 1 n \right)\,|D_i|, \qquad i=0,\dots,n-1.
\end{equation}
Moreover, by assumption $|\delta'(s)|\le 1$ and $1+\frac{1+2i}{2}\frac{\delta(s)}{n}k(s)>0$ since we assumed $1 + rk(s) > 0$ on $\overline{D^F}$. Thus, we also get
\begin{equation}\label{new}
I_{4,i}\ge 0.
\end{equation}
\noindent Similarly, we denote 
$$
\int_{D_i} |u|^pdxdy=\int_0^L\left(\int_{\frac{i\,\delta(s)}{n}}^{\frac{(i+1)\,\delta(s)}{n}}|u(s,r)|^p(1+rk(s))\,dr\right)ds=J_{1,i}+J_{2,i},
$$
where
\begin{align*}
J_{1,i}&=\int_0^L\left(\int_{\frac{i\,\delta(s)}{n}}^{\frac{(i+1)\,\delta(s)}{n}}\left(|u(s,r)|^p-\left|u\left(s,\frac{i\,\delta(s)}{n}\right)\right|^p\right)(1+rk(s))dr\right)ds,\\
J_{2,i}&=\int_0^L\left(\int_{\frac{i\,\delta(s)}{n}}^{\frac{(i+1)\,\delta(s)}{n}}\left|u\left(s, \frac{i\,\delta(s)}{n}\right)\right|^p(1+rk(s))dr\right)ds\\
&=\int_0^L\left|u\left(s,\frac{i\,\delta(s)}{n}\right)\right|^p\frac{\delta(s)}{n}\left(1+\frac{1+2i}{2} \frac{\delta(s)}{n} k(s)\right)ds.
\end{align*}
Arguing as above, we have that 
\begin{equation}\label{J1est}
|J_{1,i}|\le O\left(\frac 1 n \right)\,|D_i|, \qquad i=0,\dots, n-1.
\end{equation}

\noindent We now use Lemma \ref{2} in order to compare $I_{2,i}+I_{5,i}$ with $J_{2,i}$. Since $u$ is odd with respect to $\frac{L}{2}$ and $k(s)$ and $\delta(s)$ are even with respect to $\frac{L}{2}$, it follows that
$$
\int_0^L\left|u\left(s,\frac{i\,\delta(s)}{n}\right)\right|^{p-2}u\left(s,\frac{i\,\delta(s)}{n}\right)\,\frac{\delta(s)}{n}\left(1+\frac{1+2i}{2} \frac{\delta(s)}{n} k(s) \right)ds=0.
$$
As noted earlier, $1+\frac{1+2i}{2} \frac{\delta(s)}{n} k(s) >0$.  Recalling \eqref{cp1}, our concavity assumptions on $\delta(s)$ and either $\delta(s)k(s)$ or $\delta(s)^2k(s)$ paired with Lemma \ref{2} give
\begin{eqnarray}\label{i2}
I_{2,i}+I_{5,i}&\ge& 2^{1-p}\int_0^L\left|u_r\left(s,\frac{i\, \delta(s)}{n}\right)\frac{i\,\delta'(s)}{n}+u_s\left(s,\frac{i\,\delta(s)}{n}\right)\right|^p\frac{\delta(s)}{n}\left(1+\frac{1+2i}{2} \frac{\delta(s)}{n} k(s)\right)ds
\\
& \ge& 2^{1-p}\left(\frac{\pi_p}{L}\right)^pJ_{2,i}, \qquad i=0,\dots, n-1. \notag
\end{eqnarray}

\noindent Hence,  \eqref{i1}, \eqref{new}, and \eqref{i2} yield 
\begin{eqnarray*}
\int_{D_i}|\nabla u|^pdxdy &\ge&C_p\min\left\{1, \min_{\overline{D_i^F}}\frac{1}{(1+rk(s))^p}\right\}\left(I_{1,i}+I_{2,i}+I_{3,i}+I_{5,i}\right)
\\
&\ge&C_p\min\left\{1, \min_{\overline{D_i^F}}\frac{1}{(1+rk(s))^p}\right\}\left[  2^{1-p}\left(\frac{\pi_p}{L}\right)^pJ_{2,i}+O\left(\frac 1 n \right) |D_i|\right].
\end{eqnarray*}
Recalling the definition of $J_{2,i}$ and $B_p$ in \eqref{Bp}, we see
\begin{equation*}
\int_{D_i}|\nabla u|^pdxdy \ge B_p \left(\frac{\pi_p}{L}\right)^p\left(\int_{D_i}\left|u\right|^p\, dxdy+O\left(\frac 1 n \right)|D_i|\right)+ O\left(\frac 1 n \right)  |D_i|.
\end{equation*}
The slice estimate above hence gives the global estimate
$$
\int_D |\nabla u|^p dxdy \ge B_p \left(\frac{\pi_p}{L}\right)^p \int_D |u|^p dxdy +O\left(\frac 1 n \right) |D|,
$$
and claim follows by taking the limit as $n$ goes to $+\infty$.

\end{proof}

We next prove Theorem \ref{t11}

\begin{proof}[Proof of Theorem \ref{t11}]
We let $u$ and $D_i^F$ (with $\delta(s)=\delta)$ be as in the proof of Theorem \ref{t1}. We then have
\begin{eqnarray*}
\int_{D_i} |\nabla u|^p dxdy &=& \int_0^L\left(\int_{\frac{i\,\delta}{n}}^{\frac{(i+1)\,\delta}{n}}\left(\frac{1}{(1+rk(s))^2}u_s(s,r)^2+u_r(s,r)^2\right)^{p/2}(1+rk(s))\,dr\right)ds
\\
&\ge& \min_{\overline{D_i^F}}\frac{1}{(1+rk(s))^p}\int_0^L\left(\int_{\frac{i\,\delta}{n}}^{\frac{(i+1)\,\delta}{n}}|u_s(s,r)|^p(1+rk(s))\,dr\right)ds
\\
&\ge&A_p\int_0^L |u_s(s,r)|^p \frac{\delta}{n}\left(1+\frac{1+2i}{2}\frac{\delta}{n} k(s)\right)\, ds,
\end{eqnarray*}
with $A_p$ defined in \eqref{A_p}. With $I_{1,i}, I_{2,i}, J_{1,i}, J_{2,i}$ as in the proof of Theorem \ref{t1}, Lemma \ref{2} gives
\begin{equation}\label{i21}
I_{2,i} \ge \left(\frac{\pi_p}{L}\right)^pJ_{2,i}, \qquad i=0,\dots, n-1.
\end{equation}
Combining the estimate for $I_{1,i}$ in \eqref{i1} with \eqref{i21} yields
\begin{eqnarray*}
\int_{D_i}|\nabla u|^pdxdy &\ge&A_p\left(I_{1,i}+I_{2,i}\right)
\\
&\ge&A_p\left[ \int_0^L\left|u_s\left(s, \frac{i\,\delta}{n}\right)\right|^p\frac{\delta}{n}\left(1+\frac{1+2i}{2} \frac{\delta}{n} k(s) \right)ds+O\left(\frac 1 n \right) |D_i|\right]
\\
&\ge& A_p\left[ \left(\frac{\pi_p}{L}\right)^p\int_0^L\left|u\left(s, \frac{i\,\delta}{n}\right)\right|^p \frac{\delta}{n} \left(1+\frac{1+2i}{2} \frac{\delta}{n} k(s) \right)ds+O\left(\frac 1 n \right) |D_i|\right].
\end{eqnarray*}
Recalling the definition of $J_{2,i}$ and the estimate provided by \eqref{J1est}, we see
\begin{equation*}
\int_{D_i}|\nabla u|^pdxdy \ge A_p \left(\frac{\pi_p}{L}\right)^p\left(\int_{D_i}\left|u\right|^p\, dxdy+O\left(\frac 1 n \right)|D_i|\right)+ O\left(\frac 1 n \right)  |D_i|.
\end{equation*}
As before, we obtain the global estimate
$$
\int_D |\nabla u|^p dxdy \ge A_p \left(\frac{\pi_p}{L}\right)^p \int_D |u|^p dxdy +O\left(\frac 1 n \right) |D|,
$$
and the claim follows by sending $n \to +\infty$.
\end{proof}

\begin{remark}
As observed in \cite{BCDL}, when $p=2$ the estimate \eqref{est11} is sharp since equality is achieved when $D$ is a rectangle.
\end{remark}


\medskip

\section{Proof of Theorem \ref{th:4}}

In this section we study the asymptotic behavior of $\mu_1(D)$ as the width function goes to 0. 
For related results when $p=2$ and the width is constant, see for instance \cite{G,K,KRRS,KT,NW} and the references therein.

\begin{proof}[Proof of Theorem \ref{th:4}]
Let $u(s)$ be an eigenfunction corresponding to the first nonzero eigenvalue $\mu_1(0,L;\delta)$ of problem \eqref{eq:SLJ}.
Since, as we observed in the proof of Lemma \ref{lemma_lv}, $u(s)$ is odd with respect to $\dfrac L 2$, it can be used as a  trial function in the Rayleigh quotient for $\mu_1(D_{\varepsilon})$:
\[
\mu_1(D_{\varepsilon})\leq \frac{\dint_0^L \dint_0^{\varepsilon \delta(s)}|u'(s)|^p\frac{1}{(1+rk(s))^{p-1}}\,dr\,ds}{\dint_0^L\dint_0^{\varepsilon \delta(s)}|u(s)|^p(1+rk(s))\,dr\,ds}.
\]
The Dominated Convergence Theorem gives
\begin{align}
\limsup_{\varepsilon \to 0^+} \mu_1(D_{\varepsilon}) &\leq \lim_{\varepsilon \to 0^+}\frac{\dint_0^L \int_0^{\varepsilon \delta(s)}|u'(s)|^p\frac{1}{(1+rk(s))^{p-1}}\,dr\,ds}{\dint_0^L\dint_0^{\varepsilon \delta(s)}|u(s)|^p(1+rk(s))\,dr\,ds} \label{eq:ub} \\
&=\lim_{\varepsilon \to 0^+}
\begin{cases}
\frac{\dint_0^L u'(s)^2\frac{\log(1+\varepsilon \delta(s)k(s))}{\varepsilon k(s)}\,ds}{\dint_0^Lu(s)^2\delta(s)\left(1+\frac{1}{2}\varepsilon \delta(s)k(s)\right)\,ds} & \textup{if }p=2,\\
\displaystyle \frac{1}{2-p}\frac{\dint_0^L |u'(s)|^p\frac{(1+\varepsilon \delta(s)k(s))^{2-p}-1}{\varepsilon k(s)}\,ds}{\dint_0^L|u(s)|^p\delta(s)\left(1+\frac{1}{2}\varepsilon \delta(s)k(s)\right)\,ds} & \textup{if }p \ne 2,
\end{cases} \nonumber
\\
&=\frac{\dint_0^L|u'(s)|^p\delta(s)\,ds}{\dint_0^L|u(s)|^p\delta(s) \,ds}\nonumber \\
&=\mu_1(0,L;\delta). \nonumber
\end{align}

Next let $u_{\varepsilon}$ denote an eigenfunction for $\mu_1(D_{\varepsilon})$ normalized so that
\begin{equation}\label{eq:unorm}
\int_0^L\int_0^{\varepsilon \delta(s)}|u_{\varepsilon}|^p(1+rk(s))\,dr\,ds=\varepsilon.
\end{equation}
Make the change of variable $t=\frac{r}{\varepsilon}$ and define
\[
v_{\varepsilon}(s,t)=u_{\varepsilon}(s,\varepsilon t),\quad (s,t)\in D^F.
\]
Using the normalization from \eqref{eq:unorm}, we see that
\begin{align}
\mu_1(D_{\varepsilon})&=\frac{\dint_0^L\dint_0^{\varepsilon \delta(s)}\left(\frac{1}{\left(1+rk(s)\right)^2}\left(u_{\varepsilon}\right)^2_s+\left(u_{\varepsilon}\right)^2_{r}\right)^{\frac{p}{2}}\left(1+rk(s)\right)\,dr\,ds}{\dint_0^L\dint_0^{\varepsilon \delta(s)}|u_{\varepsilon}|^p\left(1+rk(s)\right)\,dr\,ds} \nonumber \\
&=\int_0^L\int_0^{\delta(s)}\left(\frac{1}{\left(1+\varepsilon t k(s)\right)^2}\left(v_{\varepsilon}\right)^2_s+\frac{1}{\varepsilon^2}\left(v_{\varepsilon}\right)^2_{t}\right)^{\frac{p}{2}}\left(1+\varepsilon t k(s)\right)\,dt\,ds.\label{eq:mu1ve}
\end{align}
From \eqref{eq:ub}, it follows that the eigenvalues $\mu_1(D_{\varepsilon})$ are bounded as $\varepsilon \to 0^+$. Pairing this observation with our assumption that $1+rk(s)>0$ on $\overline{D^F}$, equations \eqref{eq:unorm} and \eqref{eq:mu1ve} imply that the functions $v_{\varepsilon}$ are bounded in $W^{1,p}(D^F)$. By the Banach-Alaoglu-Bourbaki and Rellich-Kondrachov Theorems, we may pass to a subsequence and assume the existence of a function $v_0\in W^{1,p}(D^F)$ where $v_{\varepsilon}\to v_0$ weakly in $W^{1,p}(D^F)$ and strongly in $L^p(D^F)$. We may also assume $v_{\varepsilon}\to v_0$ pointwise a.e. on $D^F$.

Note that from \eqref{eq:mu1ve}, it follows that
\begin{equation}\label{eq:bdve}
\int_0^L\int_0^{\delta(s)}|\left(v_{\varepsilon}\right)_t|^p\,dt\,ds\leq (\textup{const.})\varepsilon^p\mu_1(D_{\varepsilon}).
\end{equation}
Using weak convergence, we see
\begin{align}
\int_0^L\int_0^{\delta(s)}|\left(v_0\right)_t|^p\,dt\,ds &\leq \liminf_{\varepsilon \to 0^+} \int_0^L\int_0^{\delta(s)}|\left(v_{\varepsilon}\right)_t|^p\,dt\,ds, \label{eq:vtbd}\\
\int_0^L\int_0^{\delta(s)}|\left(v_0\right)_s|^p\,dt\,ds &\leq \liminf_{\varepsilon \to 0^+} \int_0^L\int_0^{\delta(s)}|\left(v_{\varepsilon}\right)_s|^p\,dt\,ds.\label{eq:vtbd2}
\end{align}
Taken in sum, inequalities \eqref{eq:bdve} and \eqref{eq:vtbd} give that $\left(v_0\right)_t=0$ a.e. and so $v_0=v_0(s)$ on $D^F$.

Returning to \eqref{eq:mu1ve}, notice that
\[
\mu_1(D_{\varepsilon}) \geq \int_0^L\int_0^{\delta(s)}|\left(v_{\varepsilon}\right)_s|^p\,dt\,ds+\int_0^L\int_0^{\delta(s)}\left(\frac{1}{(1+\varepsilon tk(s))^{p-1}}-1\right)|\left(v_{\varepsilon}\right)_s|^p\,dt\,ds.
\]
Since $\frac{1}{(1+\varepsilon tk(s))^{p-1}}\to 1$ uniformly on $D^F$, the inequality above and \eqref{eq:vtbd2} give
\begin{equation}\label{eq:mulowebd}
\liminf_{\varepsilon \to 0^+}\mu_1(D_{\varepsilon}) \geq \int_0^L\int_0^{\delta(s)}|v_0'(s)|^p\,dt\,ds=\int_0^L|v_0'(s)|^p\delta(s)\,ds.
\end{equation}
The normalization \eqref{eq:unorm} implies that
\[
\int_0^L\int_0^{\delta(s)}|v_{\varepsilon}|^p\left(1+\varepsilon t k(s)\right)\,dt\,ds=1
\]
and so $L^p$-convergence implies
\begin{equation}\label{eq:v0norm}
\int_0^L\int_0^{\delta(s)}|v_0(s)|^p\,dt \,ds=\int_0^L|v_0(s)|^p\delta(s)\,ds=1.
\end{equation}
Recall that the eigenfunctions $u_{\varepsilon}$ satisfy
\[
\int_0^L\int_0^{\varepsilon \delta(s)}|u_{\varepsilon}|^{p-2}u_{\varepsilon}(1+rk(s))\,dr\,ds=0,
\]
so our functions $v_{\varepsilon}$ satisy
\[
\int_0^L\int_0^{\delta(s)}|v_{\varepsilon}|^{p-2}v_{\varepsilon}(1+\varepsilon tk(s))\,dt\,ds=0.
\]
By using H\"older's inequality and  Vitali's Theorem we get
\begin{equation}\label{eq:v0mean}
\int_0^L\int_0^{\delta(s)}|v_0(s)|^{p-2}v_0(s)\,dt\,ds=\int_0^L|v_0(s)|^{p-2}v_0(s)\, \delta(s)\,ds=0.
\end{equation}

Since $\varepsilon \to 0^+$ was arbitrary, combining \eqref{eq:mulowebd}, \eqref{eq:v0norm}, and \eqref{eq:v0mean}, we see that
\begin{equation}\label{eq:lastbd1}
\liminf_{\varepsilon \to 0^+}\mu_1(D_{\varepsilon}) \geq \mu_1(0,L;\delta).
\end{equation}
Combining inequalities \eqref{eq:ub} and \eqref{eq:lastbd1} completes the proof.
\end{proof}

We summarize the main results of our paper in the following final remark.
\begin{remark}
With $\gamma$ and $\delta$ as in the Introduction and $1+rk(s)>0$ on $\overline{D^F}$,  if $D_{\varepsilon}$ is the domain with width function $\varepsilon \delta(s)$, then Lemma \ref{2}, Lemma \ref{lemma_lv}, and Theorem \ref{th:4} give
\[
\lim_{\varepsilon \to 0^+}\mu_1(D_{\varepsilon})=\mu_1(0,L;\delta)\ge 
\left\{
\begin{array}{ll}
\left(\dfrac{\pi_p}{ L} \right)^p \qquad &\mbox{ if $\delta(s)$ is log-concave in $(0,L)$,}
\\ \\
\dfrac{\displaystyle\min_{s \in [0,L]}\delta(s)}{\dint_0^{L/2} \left(\frac L 2 -s\right)^{p-1}\delta(s)\,ds} & \mbox{if $\delta(s)$ is continuous in $[0,L]$.}
\end{array}
\right.
\]
Moreover, if $p=2$, observe that
\[
\displaystyle \lim_{\varepsilon \to 0^+} \frac{1}{\displaystyle \max_{s\in [0,L]} \left(2\varepsilon \delta(s)+(\varepsilon \delta(s))^2k(s)\right)^2}=\displaystyle \lim_{\varepsilon \to 0^+} \frac{1}{\displaystyle \max_{s\in [0,L]} \frac{4(\varepsilon \delta(s))^2}{\left(1+\varepsilon \delta(s)k(s)\right)^2}}=+\infty.
\]
Hence, when paired together, Theorems \ref{t2} and \ref{th:4} give
\[
\mu_1^{odd}(D_{\varepsilon})=\mu_1(D_{\varepsilon})
\]
for all $\varepsilon$ sufficiently small. Thus, Theorems \ref{t11} and $\ref{t1}$ provide explicit lower bounds on $\mu_1(D_{\varepsilon})$ for all $\varepsilon$ sufficiently small, provided the additional assumptions of those results are satisfied.
\end{remark}


\section*{Acknowledgement}
The authors want to thank D. Krej\v{c}i\v{r}\'ik for useful discussions. The first author has been partially supported by FFR 2024 Barbara Brandolini, Universit\`a degli Studi di Palermo (Italy).
The second author has been partially supported by the PRIN project 2017JPCAPN (Italy) grant:
``Qualitative and quantitative aspects of nonlinear PDEs''.

\vskip 1.8cm

\end{document}